\newtheorem{theorem}{Theorem}[section]
\newtheorem{corollary}[theorem]{Corollary}
\newtheorem{remark}[theorem]{Remark}
\newtheorem{example}[theorem]{Example}
\theoremstyle{definition} \theoremstyle{remark}
\numberwithin{equation}{section}
\def\C{\mathbb C}
\def\R{\mathbb R}
\def\Z{\mathbb Z}
\def\Z{\mathbb Z}
\def\<{\langle}
\def\>{\rangle}
\def\diag {\mathrm{diag}}
\newcommand{\tr}{\operatorname{tr}}
\def\F{\mathbb{F}}
\renewcommand{\i}{\mathbf{i}}
\newcommand{\be}[2]{\begin{#1} #2 \end{#1}}  
\newcommand{\mtx}[1]{\begin{pmatrix} #1 \end{pmatrix}} 
\newcommand{\wt}[1]{\widetilde{#1}}
\renewcommand{\Re}{\operatorname{Re}}
\newcommand{\Ima}{\operatorname{Im}}
\newcommand{\ol}[1]{\overline{#1}}
\begin{document}

\title{Maps   preserving trace of products of matrices}

\author[H. Huang] {Huajun Huang}
\address{Department of Mathematics and Statistics, Auburn University,
AL 36849--5310, USA}
  \email{huanghu@auburn.edu}

\author[M. Tsai] {Ming-Cheng Tsai}
\address{General Education Center,
Taipei University of Technology, Taipei 10608, Taiwan}
\email{mctsai2@mail.ntut.edu.tw}

\begin{abstract}
We prove the linearity and injectivity of  two maps $\phi_1$ and $\phi_2$ on certain subsets of $M_n$ that satisfy $\tr(\phi_1(A)\phi_2(B))=\tr(AB)$.
We apply it to characterize  maps $\phi_i:\mathcal{S}\to \mathcal{S}$ ($i=1, \ldots, m$) satisfying
$$\tr (\phi_1(A_1)\cdots \phi_m(A_m))=\tr (A_1\cdots A_m)$$
 in which $\mathcal{S}$ is the set of $n$-by-$n$
general, Hermitian, or symmetric matrices for $m\ge 3$,  or   positive definite or diagonal matrices for $m\ge 2$.
The real versions  are also given.
\end{abstract}

%
%
%

\keywords{trace of products, preserver, linear operators, matrix space}

\footnote{Mathematics Subject Classification 2020: Primary 15A86, Secondary 47B49, 15A15, 15A04.}

\maketitle

\section{Introduction}

Preserver problem is one of the most active research area in matrix theory (e.g. \cite{Li01, Li92, Molnar, Pierce}). Researchers would like to characterize the maps on a given space of matrices preserving certain subsets, functions or relations. In this paper, we investigate maps $\phi_1,\ldots,\phi_m$ on several types of matrix spaces that have the trace-preserving property:
\be{equation}
{\label{20}
\tr (\phi_1(A_1)\cdots \phi_m(A_m))=\tr (A_1\cdots A_m).
}

Let $M_{m,n}$ denote the set of $m\times n$ complex matrices.
Let $M_n$, $H_n$, $S_n$, $P_n$, $\ol{P_n}$, and $D_n$ denote the set of $n\times n$ complex general, Hermitian, symmetric, positive definite, positive semi-definite, and diagonal matrices respectively. We add $(\R)$ after the above sets to denote their counterparts in real field.
Let $B(X)$ denote the space of all bounded linear operators on a Banach space $X$.

The classical Wigner's unitary-antiunitary theorem {\cite[p.12]{Molnar}} says that if a bijective map $\phi$ defined on the set of all rank one projections acting on a Hilbert space $H$ with the property
$$\tr(\phi(P)\phi(Q))=\tr(PQ),$$
that is, $\phi$ preserves the transition probability $\langle \phi(P),\phi(Q)\rangle=\langle P, Q\rangle$ between the pure states $P$ and $Q$ (or the angle between the ranges of $P$ and $Q$), then there is an either unitary or antiunitary operator $U$ on $H$ such that $\phi(P)=U^{*}PU$ or $\phi(P)=U^{*}P^{t}U$ for all rank one projections $P$. There is a nice generalization of Wigner's theorem due to Uhlhorn. He shows that the same conclusion holds if the property $\tr(\phi(P)\phi(Q))=\tr(PQ)$ is replaced by
$\tr(\phi(P)\phi(Q))=0 \Leftrightarrow \tr(PQ)=0$
(see \cite{Uh63}).

In 2002, linearity  of  maps  $\phi$ on $B(X)$ satisfying
\begin{equation}\label{tr-prod-2-preserving}
\tr(\phi(A)\phi(B))=\tr(AB),
\end{equation}
for $A\in B(X)$, rank one operator $B\in B(X)$ are explored by Moln\'{a}r in the proof of \cite[Theorem 1]{Molnar2} to characterize surjective maps preserving the point spectrum of product of operators:
$\sigma_p(\phi(A)\phi(B))=\sigma_p(AB)$ for $A, B \in B(X)$.
Later,  Moln\'{a}r  removes the surjectivity assumption and proves the bijectivity  of $\phi$  on the finite dimensional case $M_n$ in the proof of \cite[Theorem 1]{Molnar3}.

 In 2012, Li, Plevnik, and {\v S}emrl \cite{Li12} characterize bijective maps $\phi: S\rightarrow S$ such that for a given  real number $c$,
$$\tr(\phi(A)\phi(B))=c \ \Leftrightarrow \ \tr(AB)=c,$$
where $S$ is $H_n$, the set $S_n(\R)$ of   real symmetric matrices,  or the set of  rank one projections. In \cite[Lemma 3.6]{Huang16}, Huang et al show that the following statements are equivalent for
a unital operator $\phi$ on $P_n$:
\be{enumerate}{
\item
$\tr(\phi(A)\phi(B))=\tr(AB)$ for $A,B\in P_n$;
\item
$\tr(\phi(A)\phi(B)^{-1})=\tr(AB^{-1})$ for $A, B\in P_n$;
\item
$\det(\phi(A)+\phi(B))=\det(A+B)$ for $A,B\in P_n$;
\item
$\phi(A)=U^*AU$ or $U^*A^t U$ for a unitary matrix $U$.
}
 The authors  also consider the cases if $\phi$ is not assuming unital, the set $P_n$ is replaced by another set like $M_n$,  $S_n$ , or
 the set of  upper triangular matrices in $M_n$. In \cite[Theorem 3.8]{Leung16}, Leung, Ng, and Wong consider the relation \eqref{tr-prod-2-preserving} in terms of Raggio transition probability
between the spaces  of all normal states on  two von Neumann algebras.

Recently, some researchers consider similar relations of two maps preserving certain spectra on some spaces. See Miura in \cite{Mi09}  between two standard operator algebras. See Bourhim and Lee in \cite{Bourhim} between $B(X)$ and $B(Y)$ for some Banach spaces $X$ and $Y$. See Abdelali and Aharmim in \cite{Abdelali} between two rectangular matrices.

Let $I_n$ (or simply $I$) denote the identity matrix in $M_n$.
Given a subset $S$ of a vector space over a field $\F$, let $\langle S\rangle$   denote the $\F$-subspace spanned by elements of $S$.

In Section 2, we prove the linearity  and injectivity of
two maps $\phi_1$ and $\phi_2$ on some matrix sets $\mathcal{S}$   that satisfy \eqref{20}
for $m=2$ (Theorems \ref{thm: two maps preserving trace}
and \ref{thm: two maps preserving trace linear}):
\be{equation}{\label{trace-preserving-2-maps}
\tr(\phi_1(A)\phi_2(B))=\tr(AB),\quad \forall A, B\in \mathcal{S}.
}
  These preliminary results
are widely used in our characterizations of maps preserving trace of products.
The matrix sets discussed in this paper are  closed under Hermitian conjugate. The relation  \eqref{trace-preserving-2-maps} in these sets can also be interpreted as inner product preserver:
\be{equation}{
\langle \phi_1(A),\phi_2(B)\rangle=\langle A,B\rangle,\quad \forall A, B\in \mathcal{S}.
}
Our discussions and results on trace preserving maps will be helpful to characterize inner product preservers in related matrix spaces.

The maps $\phi_1,\ldots,\phi_m:\mathcal{S}\to \mathcal{S}$ that satisfy \eqref{20} are characterized as follows:

$m\geq3$:
\begin{itemize}
\item $\mathcal{S}=M_n$: $\phi_i(A)=N_i A N_{i+1}^{-1}$ where $N_{m+1}:=N_1$ (Theorem \ref{thm: M_n trace preserve m product}).

\item  $\mathcal{S}= H_n$ (resp., $P_n$ or $\ol{P_n}$):
\begin{enumerate}
\item
when $m$ is odd, $\phi_i(A)=c_i U^*AU$  where $U$ is unitary;
\item
when $m$ is even, $\phi_i(A)=c_i M^*AM$ if $i$ is odd, and $\phi_i(A)=c_i M^{-1}AM^{-*}$ if $i$ is even.
\end{enumerate}
Here $c_i\in \mathbb{R}$ (resp., $c_i\in \mathbb{R}^+$) with $c_1\cdots c_m=1$, as stated in Theorem \ref{thm: H_n trace preserve m product} (resp. Theorem \ref{thm: multiple product trace preserver}).

\item  $\mathcal{S}=S_n$:
\begin{enumerate}
\item
when $m$ is odd, $\phi_i(A)=c_i O^tAO$  where $O$ is orthogonal;
\item
when $m$ is even, $\phi_i(A)=c_i M^tAM$ if $i$ is odd, and $\phi_i(A)=c_i M^{-1}AM^{-t}$ if $i$ is even.
\end{enumerate}
Here $c_i\in \mathbb{C}$ with $c_1\cdots c_m=1$, as stated in Theorem \ref{thm: S_n trace preserve m product}.

\item    $\mathcal{S}=D_n$: $\phi_i (A)=C_i P^t AP$  where $P$ is a permutation matrix, and $C_1,\ldots,C_m$ are diagonal matrices with $C_1\cdots C_m=I$ (Theorem \ref{thm: D_n trace preserve m product}).

\end{itemize}

$m=2$:

\begin{itemize}
\item    $\mathcal{S}=P_n$ or $\overline{P_n}$: $\phi_1(A)=M^*AM,\  \phi_2(A)=M^{-1} AM^{-*}$; or $\phi_1(A)=M^*A^tM,\ \phi_2(A)=M^{-1} A^tM^{-*}$ (Theorem \ref{thm: P_n preserve trace}).

\item    $\mathcal{S}=D_n$: $\phi_1(A)=\diag(N\diag^{-1}(A)),\  \phi_2(A)=\diag(N^{-t}\diag^{-1}(A))$ (Theorem \ref{thm: trace preserve 2 product}).

\end{itemize}

Corollary \ref{thm: multiple weighted product trace preserver} describes maps $\phi_i: P_n\to \ol{P_n}$ ($i=1,\ldots,m$, $m\ge 2$) that satisfy
\begin{equation}
\tr \left(\phi_1(A_1)^{\alpha_1}\cdots \phi_m(A_m)^{\alpha_m}\right)=\tr \left(A_1^{\beta_1}\cdots A_m^{\beta_m}\right),\quad\forall A_1,\ldots,A_m\in P_n,
\end{equation}
for given nonzero real numbers $\alpha_1,\ldots,\alpha_m, \beta_1,\ldots,\beta_m.$

All above results have the corresponding real versions. We discuss them after the complex versions in each section.

\section{Preliminary}

\subsection{Two maps preserving trace of product}


We first give a general result about two maps preserving   trace of product.  The theorem is a powerful  tool to prove linear bijectivity
of various maps preserving   trace of product in
our later discussions. Recall that if $S$ is a subset of a vector space, then
$\langle S\rangle$ denotes the subspace spanned by $S$.

\begin{theorem}\label{thm: two maps preserving trace}
Let  $\phi: V_1\to W_1$ and $\psi: V_2\to W_2$  be two maps between subsets of matrix spaces over a field $\F$ such that:
\begin{enumerate}
\item $\dim \langle V_1\rangle=\dim \langle V_2\rangle\ge \max\{\dim \langle W_1\rangle, \dim \langle W_2\rangle\}$.
\item 
$AB$  are well-defined square matrices for $(A,B)\in (V_1\times V_2)\cup(W_1\times W_2)$.
\item \label{tr invertible}
 If $A\in \langle V_1\rangle $ satisfies that $\tr(AB)=0$ for all $B\in \langle V_2\rangle$, then $A=0$.
\item $\phi$ and $\psi$ satisfy that
\begin{equation}
\tr (\phi(A)\psi(B))=\tr(AB),\quad A\in V_1,\ B\in V_2.
\end{equation}
\end{enumerate}
Then
$\dim \langle V_1\rangle =\dim \langle V_2\rangle =\dim \langle W_1\rangle =\dim \langle W_2\rangle $
and $\phi$ and $\psi$ can be extended to bijective linear map $\widetilde\phi : \langle V_1\rangle \to  \langle W_1\rangle$ and $\widetilde\psi: \langle V_2\rangle \to  \langle W_2\rangle$, respectively, such that
\begin{equation}\label{extended map preserve trace}
\tr (\wt{\phi}(A)\wt{\psi}(B))=\tr(AB),\quad   A\in \langle V_1\rangle,\ B\in \langle V_2\rangle.
\end{equation}
\end{theorem}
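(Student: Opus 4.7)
The plan is to exploit the non-degeneracy built into hypothesis~(3): together with the dimension equality in~(1), it makes the trace pairing on $\langle V_1\rangle\times\langle V_2\rangle$ perfect, so that $\langle V_2\rangle$ is identified with the dual of $\langle V_1\rangle$. First I would set $d := \dim\langle V_1\rangle = \dim\langle V_2\rangle$ and choose bases $A_1,\ldots,A_d$ of $\langle V_1\rangle$ drawn from $V_1$ and $B_1,\ldots,B_d$ of $\langle V_2\rangle$ drawn from $V_2$. Hypothesis~(3) says the $d\times d$ Gram matrix $G := [\tr(A_iB_j)]$ has injective transpose (a kernel vector $c$ would make $\sum_i c_i A_i$ annihilate $\langle V_2\rangle$, hence vanish), so $G$ is invertible.

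By the trace-preserving identity in~(4), $[\tr(\phi(A_i)\psi(B_j))]=G$ is invertible as well, so any relation $\sum_i c_i\phi(A_i)=0$ forces $G^\top c=0$ and thus $c=0$; the list $\phi(A_1),\ldots,\phi(A_d)$ is linearly independent in $\langle W_1\rangle$, and symmetrically $\psi(B_1),\ldots,\psi(B_d)$ is linearly independent in $\langle W_2\rangle$. Combined with the inequality $d \ge \max\{\dim\langle W_1\rangle,\dim\langle W_2\rangle\}$ from~(1), both images are bases and $\dim\langle W_1\rangle = \dim\langle W_2\rangle = d$. I would then define $\wt\phi:\langle V_1\rangle \to \langle W_1\rangle$ and $\wt\psi:\langle V_2\rangle \to \langle W_2\rangle$ as the linear extensions $A_i\mapsto\phi(A_i)$ and $B_j\mapsto\psi(B_j)$; each is a linear bijection by construction, and bilinearity of the trace together with $\tr(\wt\phi(A_i)\wt\psi(B_j))=\tr(A_iB_j)$ yields \eqref{extended map preserve trace} throughout $\langle V_1\rangle\times\langle V_2\rangle$.

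The main obstacle, and the only non-routine step, is verifying that $\wt\phi$ agrees with $\phi$ on all of $V_1$ (and $\wt\psi$ with $\psi$ on $V_2$), since $V_1$ need not itself be a basis. For $A\in V_1$, expanding $A=\sum_i c_i A_i$ and subtracting the trace identities satisfied by $\phi(A)$ and $\wt\phi(A)$ against each $\psi(B_j)$ shows that $\phi(A)-\wt\phi(A) \in \langle W_1\rangle$ is annihilated by every element of $\langle W_2\rangle$ under the trace pairing. To force the difference to vanish I need non-degeneracy on $\langle W_1\rangle\times\langle W_2\rangle$, which is not directly assumed; however, it transports from hypothesis~(3) via the bijections $\wt\phi,\wt\psi$: writing $X=\wt\phi(A')$, the identity \eqref{extended map preserve trace} turns trivial pairing of $X$ against $\langle W_2\rangle$ into trivial pairing of $A'$ against $\langle V_2\rangle$, forcing $A'=0$ by~(3) and hence $X=0$. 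This closes the argument for $\phi$, and the symmetric one handles $\psi$.
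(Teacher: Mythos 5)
Your proposal is correct and follows essentially the same route as the paper: choose bases of $\langle V_1\rangle$ and $\langle V_2\rangle$ from $V_1$ and $V_2$, use hypothesis (3) to see that the Gram matrix $[\tr(A_iB_j)]$ is invertible, transfer this via (4) to conclude linear independence of the images, invoke (1) to get the dimension equalities, and define $\wt\phi,\wt\psi$ by linear extension on these bases. Your final step — verifying $\wt\phi=\phi$ on $V_1$ by showing the difference pairs to zero against $\langle W_2\rangle$ and that this pairing is non-degenerate — is exactly the paper's argument, phrased via transported non-degeneracy rather than via the invertibility of $[\tr(\phi(A_i)\psi(B_j))]$ directly, which is the same fact.
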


\begin{proof}
Let $\{A_1,\ldots,A_m\}\subseteq V_1$ be a basis of $\langle V_1\rangle $, and $\{B_1,\ldots, B_m\}\subseteq V_2$ a basis of $\langle V_2\rangle$, respectively. By assumptions (3) and (4), the matrix
\begin{equation}\label{eq: two maps trace of bases}
[\tr(\phi(A_i)\psi(B_j))]_{m\times m}=[\tr(A_i B_j)]_{m\times m}
\end{equation}
 is invertible. So
both sets $\{\phi(A_1),\ldots,\phi(A_m)\}\subseteq W_1$ and $\{\psi(B_1),\ldots,\psi(B_m)\}\subseteq W_2$
are linearly independent.
By assumption (1), $\dim \langle V_1\rangle =\dim \langle V_2\rangle =\dim \langle W_1\rangle=\dim \langle W_2\rangle$,
and
$\{\phi(A_1),\ldots,\phi(A_m)\}$ (resp.  $\{\psi(B_1),\ldots,\psi(B_m)\}$) is a basis of $W_1$ (resp. $W_2$.)

Let
$\widetilde\phi : \langle V_1\rangle \to  \langle W_1\rangle$ and $\widetilde\psi: \langle V_2\rangle \to  \langle W_2\rangle$ be the   linear maps defined  in the way that for all $c_1,\ldots,c_m\in\F$:
\begin{equation}
\wt{\phi}\left(\sum_{i=1}^{m} c_i A_i\right) :=\sum_{i=1}^{m} c_i \phi(A_i),
\qquad
\wt{\psi}\left(\sum_{i=1}^{m} c_i B_i\right) :=\sum_{i=1}^{m} c_i \psi(B_i).
\end{equation}
If a linear combination $\sum_{i=1}^{m} c_i A_i\in V_1$, then for all $j=1,\ldots,m$,
\begin{eqnarray*}
\tr\left[\wt{\phi}\left(\sum_{i=1}^{m} c_i A_i\right) \psi(B_j) \right]
&=&
\sum_{i=1}^{m} c_i \tr\left( \phi(A_i) \psi(B_j) \right)
\\
&=& \sum_{i=1}^{m}c_i \tr( A_iB_j)=
\tr \left[\left(\sum_{i=1}^{m}c_i  A_i\right) B_j\right]
\\
&=& \tr\left[\phi\left(\sum_{i=1}^{m} c_i A_i\right) \psi(B_j) \right].
\end{eqnarray*}
So by assumption (3) and \eqref{eq: two maps trace of bases}, $\wt{\phi}\left(\sum_{i=1}^{m} c_i A_i\right)=\phi\left(\sum_{i=1}^{m} c_i A_i\right)$.
Therefore,   $\wt{\phi}: \langle V_1\rangle \to  \langle W_1\rangle$ is a  linear bijection and an extension of $\phi:V_1\to W_1$. Similarly,
 $\wt{\psi}: \langle V_2\rangle \to  \langle W_2\rangle$ is a linear bijection and an extension of $\psi:V_2\to W_2$.
\end{proof}

Theorem \ref{thm: two maps preserving trace} can be applied to \cite[Lemma 3.5]{Abdelali} which says that:  if $U$ and $V$ are nonempty open subsets of $M_{m,n}$ and
$M_{n,m}$ respectively, $\phi:U\to M_{p,q}$ and $\psi:V\to M_{q,p}$ satisfy that $\tr(\phi(A)\psi(B))=\tr(AB)$ for all $A\in U$ and $B\in V$, and $mn\ge pq$, then
$\phi$ and $\psi$ can be extended to linear bijections $M_{m,n} \to M_{p,q}$ and $M_{n,m}\to M_{q,p}$ respectively.
Theorem \ref{thm: two maps preserving trace} on maps $M_n\to M_n$ also generalizes the result: if a map $\phi:M_n\to M_n$ satisfies $\tr(\phi(A)\phi(B))=\tr (AB)$ for all $A, B\in M_n$, then $\phi$ must be a linear bijection in Moln\'{a}r \cite[proof of Theorem 1]{Molnar3} and in J. T. Chan, C. K. Li, and N. S. Sze  \cite[Prop. 1.1]{ChanLiSze}.

A subset $V$ of $M_n$   is closed under Hermitian conjugate if $\{A^*: A\in V\}\subseteq V$.
The following are some examples:
\begin{enumerate}
\item Each of $M_n$, $H_n$, $P_n$, $\ol{P_n}$,  $D_n$,  and the sets  of real/complex symmetric/skew-symmetric matrices,
is closed under Hermitian conjugate.
\item A combination of intersections, unions, sums, direct sums,  Kronecker products and compounds  of matrices in some sets closed under Hermitian conjugate will generate a set closed under Hermitian conjugate.
\end{enumerate}
A real or complex  matrix space $V$ is closed under Hermitian conjugate if and only if $V$ equals the direct sum of
 subspace of Hermitian  matrices and  subspace of skew-Hermitian  matrices.

\begin{corollary}\label{thm: trace preserver on special spaces}
Let $V$ be a subset of $M_n$ closed under Hermitian conjugate.
Suppose two maps $\phi,\psi:V\to V$   satisfy that
\begin{equation}\label{general two maps preserve trace}
\tr(\phi(A)\psi(B))=\tr(AB),\quad  A, B\in V.
\end{equation}
Then $\phi$ and $\psi$   can be extended to linear bijections on $\langle V\rangle$.
Moreover, when $V$ is a vector space,
 every linear bijection $\phi:V\to V$ corresponds to a unique  linear bijection $\psi:V\to V$ such that \eqref{general two maps preserve trace} holds.
Explicitly, given   an orthonormal basis $\{A_1,\ldots,A_\ell\}$ of $V$ with respect to the inner product $\langle A,B\rangle =\tr(A^*B)$,
$\psi$ is defined by
 $\psi(A_i) =B_i$  in which $\{ B_1,\ldots, B_\ell\}$
 is a basis of $V$ with $\tr(\phi(A_i^*) B_j)=\delta_{i,j}$ for all $i,j\in\{1,\ldots,\ell\}$.
\end{corollary}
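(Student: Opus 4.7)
The plan is to split the proof into two parts: the extension-to-linear-bijections claim, and the ``moreover'' assertion giving existence, uniqueness, and an explicit formula for $\psi$ associated to a given linear bijection $\phi$.

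For the first part, I would simply invoke Theorem \ref{thm: two maps preserving trace} with $V_1=V_2=W_1=W_2=V$. The dimension hypothesis is automatic, and products $AB$ are defined in $M_n$. The only substantive hypothesis to verify is condition (3), namely the non-degeneracy of $(X,Y)\mapsto\tr(XY)$ on $\langle V\rangle$. This is the key observation and the whole reason Hermitian-conjugate closure enters: since $V$ is closed under Hermitian conjugate, so is $\langle V\rangle$, and for any $A\in\langle V\rangle$ with $\tr(AB)=0$ for every $B\in\langle V\rangle$, substituting $B=A^*\in\langle V\rangle$ yields $\tr(AA^*)=\|A\|_F^2=0$, hence $A=0$. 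Theorem \ref{thm: two maps preserving trace} then produces the desired linear bijective extensions of $\phi$ and $\psi$.

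For the second part, assume $V$ is a vector space and $\phi:V\to V$ is a linear bijection. Let $\{A_1,\ldots,A_\ell\}$ be an orthonormal basis of $V$ with respect to $\langle A,B\rangle=\tr(A^*B)$. A quick computation $\tr((A_i^*)^*A_j^*)=\tr(A_iA_j^*)=\overline{\tr(A_i^*A_j)}=\delta_{i,j}$ shows that $\{A_i^*\}$ is also an orthonormal basis, so $\{\phi(A_i^*)\}$ is a basis of $V$ by linear bijectivity of $\phi$. The non-degeneracy established above implies the bilinear pairing $(X,Y)\mapsto\tr(XY)$ identifies $V$ with its own algebraic dual, so there is a unique ``dual basis'' $\{B_1,\ldots,B_\ell\}\subset V$ with $\tr(\phi(A_i^*)B_j)=\delta_{i,j}$. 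Define $\psi:V\to V$ linearly by $\psi(A_j):=B_j$.

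To verify $\tr(\phi(A)\psi(B))=\tr(AB)$ on $V\times V$, I expand $A=\sum_i\beta_i A_i^*$ with $\beta_i=\tr(A_iA)$ and $B=\sum_j d_j A_j$ with $d_j=\tr(A_j^*B)$. Both $\tr(\phi(A)\psi(B))$ and $\tr(AB)$ then collapse to $\sum_i\beta_i d_i$ via the orthogonality relations $\tr(\phi(A_i^*)B_j)=\delta_{i,j}$ and $\tr(A_i^*A_j)=\delta_{i,j}$, respectively. For uniqueness, if $\psi_1,\psi_2$ both work, then $\tr(\phi(A)(\psi_1(B)-\psi_2(B)))=0$ for all $A,B\in V$; bijectivity of $\phi$ lets me replace $\phi(A)$ by arbitrary $X\in V$, and non-degeneracy forces $\psi_1=\psi_2$. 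The main obstacle is really just isolating and exploiting the non-degeneracy of $\tr(XY)$ on $V$; once that is in hand, everything else is bookkeeping with dual bases.
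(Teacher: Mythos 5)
Your proof is correct and follows the route the paper intends: the extension claim is exactly Theorem \ref{thm: two maps preserving trace} applied with $V_1=V_2=W_1=W_2=V$, where the only real content is your observation that Hermitian-conjugate closure of $\langle V\rangle$ gives non-degeneracy of $(X,Y)\mapsto\tr(XY)$ via $B=A^*$, and the ``moreover'' part is the dual-basis bookkeeping already encoded in the statement. The paper leaves this corollary unproved, and your write-up supplies precisely the intended details with no gaps.
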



A similar result of Theorem \ref{thm: two maps preserving trace} is obtained by assuming linearity of $\phi$ and $\psi$ but
dropping the dimension restriction on their codomains, in which $\phi$ and $\psi$ can be extended to injective linear maps preserving trace of product. A map $\phi: V\to W$ between subsets of vector spaces over $\F$ is called linear if
$v_1,\ldots,v_k, c_1v_1+\cdots+c_kv_k\in V$ in which $c_1,\ldots,c_k\in\F$ implies that
\be{equation}{
\phi( c_1v_1+\cdots+c_kv_k)= c_1\phi(v_1)+\cdots+c_k\phi(v_k).
}

\begin{theorem}\label{thm: two maps preserving trace linear}
Let  $\phi: V_1\to W_1$ and $\psi: V_2\to W_2$  be two linear maps between subsets of matrix spaces over a field $\F$ such that:
\begin{enumerate}
\item $\dim \langle V_1\rangle=\dim \langle V_2\rangle$.
\item 
$AB$  are well-defined square matrices for $(A,B)\in (V_1\times V_2)\cup(W_1\times W_2)$.
\item
 If $A\in \langle V_1\rangle $ satisfies that $\tr(AB)=0$ for all $B\in \langle V_2\rangle$, then $A=0$.
\item $\phi$ and $\psi$ satisfy that
\begin{equation}
\tr (\phi(A)\psi(B))=\tr(AB),\quad   A\in V_1,\ B\in V_2.
\end{equation}
\end{enumerate}
Then
$\dim \langle V_1\rangle =\dim \langle V_2\rangle \le\min\{\dim \langle W_1\rangle,\dim \langle W_2\rangle\}  $
and $\phi$ and $\psi$ can be extended to  two injective linear maps $\widetilde\phi : \langle V_1\rangle \to  \langle W_1\rangle$ and $\widetilde\psi: \langle V_2\rangle \to  \langle W_2\rangle$, respectively, such that
\begin{equation}\label{extended map preserve trace}
\tr (\wt{\phi}(A)\wt{\psi}(B))=\tr(AB),\quad  A\in \langle V_1\rangle,\ B\in \langle V_2\rangle.
\end{equation}
\end{theorem}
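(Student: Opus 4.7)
The plan is to mirror the structure of the proof of Theorem \ref{thm: two maps preserving trace}, replacing the role of the dimension bound on the codomains (which yielded bijectivity there) by the linearity hypothesis given here (which will only yield injectivity). A preliminary observation simplifies everything: assumption (1), $\dim\langle V_1\rangle=\dim\langle V_2\rangle=:m$, combined with the one-sided non-degeneracy in (3), forces the trace pairing $\langle V_1\rangle\times\langle V_2\rangle\to\F$, $(A,B)\mapsto\tr(AB)$, to be non-degenerate in \emph{both} variables: the map $\langle V_1\rangle\to\langle V_2\rangle^*$ induced by (3) is injective, and equality of finite dimensions promotes it to an isomorphism, which yields the dual non-degeneracy statement.

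First I would select a basis $\{A_1,\ldots,A_m\}\subseteq V_1$ of $\langle V_1\rangle$ and a basis $\{B_1,\ldots,B_m\}\subseteq V_2$ of $\langle V_2\rangle$. Non-degeneracy makes $G:=[\tr(A_iB_j)]$ invertible, and by (4) the matrix $[\tr(\phi(A_i)\psi(B_j))]=G$ is invertible as well. Exactly as in Theorem \ref{thm: two maps preserving trace}, this forces $\{\phi(A_i)\}\subseteq W_1$ and $\{\psi(B_j)\}\subseteq W_2$ to be linearly independent, yielding the dimension inequality $m\le\min\{\dim\langle W_1\rangle,\dim\langle W_2\rangle\}$.

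Next I would define $\widetilde\phi:\langle V_1\rangle\to\langle W_1\rangle$ and $\widetilde\psi:\langle V_2\rangle\to\langle W_2\rangle$ by linear extension, $\widetilde\phi(\sum c_iA_i):=\sum c_i\phi(A_i)$, and symmetrically for $\widetilde\psi$. The critical verification is that $\widetilde\phi$ agrees with $\phi$ on all of $V_1$, not merely on the chosen basis. Here the weak linearity hypothesis on $\phi$ does exactly the required work: any $v\in V_1$ can be written as $\sum c_iA_i$, and since both $v$ and the $A_i$ lie in $V_1$, the linearity clause forces $\phi(v)=\sum c_i\phi(A_i)=\widetilde\phi(v)$. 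Bilinearity then yields the extended trace identity directly:
\begin{equation*}
\tr(\widetilde\phi(A)\widetilde\psi(B))=\sum_{i,j}c_id_j\tr(\phi(A_i)\psi(B_j))=\sum_{i,j}c_id_j\tr(A_iB_j)=\tr(AB).
\end{equation*}

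Finally, injectivity falls out of trace preservation and non-degeneracy: if $\widetilde\phi(A)=0$, then $\tr(AB)=\tr(\widetilde\phi(A)\widetilde\psi(B))=0$ for every $B\in\langle V_2\rangle$, whence $A=0$ by (3); the dual non-degeneracy noted at the outset similarly gives injectivity of $\widetilde\psi$. The only subtle point in the argument is the reconciliation of the weak, restricted-to-the-domain linearity given as a hypothesis with the genuine linearity of the basis-defined extension; this is what lets us claim $\widetilde\phi|_{V_1}=\phi$ without any additional structure. Once this reconciliation is made, everything else is routine bookkeeping essentially parallel to Theorem \ref{thm: two maps preserving trace}.
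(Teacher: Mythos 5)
Your proof is correct, but it takes a different route from the paper's. The paper disposes of this theorem in two lines: since $\phi$ and $\psi$ are linear, $\dim\langle \Ima\phi\rangle$ and $\dim\langle \Ima\psi\rangle$ are at most $\dim\langle V_1\rangle=\dim\langle V_2\rangle$, so viewing $\phi:V_1\to\Ima\phi$ and $\psi:V_2\to\Ima\psi$ restores the codomain dimension bound required by Theorem \ref{thm: two maps preserving trace}, and that theorem then hands back bijections onto $\langle\Ima\phi\rangle$ and $\langle\Ima\psi\rangle$, which are injections into $\langle W_1\rangle$ and $\langle W_2\rangle$. You instead rerun the whole basis/Gram-matrix argument from scratch, and the two approaches spend the linearity hypothesis in different places: the paper uses it only to bound the dimensions of the spanned images, letting Theorem \ref{thm: two maps preserving trace}'s internal trace-nondegeneracy argument establish that the extension agrees with $\phi$ on $V_1$; you use it directly to get $\phi(v)=\sum c_i\phi(A_i)=\widetilde\phi(v)$ for $v=\sum c_iA_i\in V_1$, which is arguably cleaner at that step. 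Your explicit observation that assumptions (1) and (3) force the trace pairing to be nondegenerate in \emph{both} variables (needed for the injectivity of $\widetilde\psi$) is a point the paper leaves implicit inside the invertibility of $[\tr(A_iB_j)]$. The paper's reduction buys brevity and reuse; your version buys self-containment and makes the symmetry of the nondegeneracy explicit. Both are sound.
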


\be{proof}{
We have $\dim \langle V_1\rangle=\dim \langle V_2\rangle\ge \max\{\dim\langle \Ima\phi\rangle,\dim\langle \Ima\psi\rangle\}$.
View $\phi$ and $\psi$ as $\phi:V_1\to  \Ima\phi$ and $\psi: V_2\to \Ima\psi$ and apply Theorem \ref{thm: two maps preserving trace}.
}

\begin{corollary} \label{thm: trace preserving injection}
 Let $V$ and $W$ be subsets of matrix spaces in which $V$ is closed under Hermitian conjugate.
 Suppose two linear maps $\phi, \psi: V\to W$  satisfy that
\begin{equation}\label{matrix algebra trace preserver}
\tr(\phi(A)\psi(B))=\tr(AB),\quad A, B\in V.
\end{equation}
Then $\phi$ and $\psi$ can be extended to  injective linear maps $\widetilde\phi, \widetilde\psi: \langle V\rangle \to  \langle W\rangle$  preserving trace of product. In particular, $\dim \langle V\rangle \le\dim \langle W\rangle$.
\end{corollary}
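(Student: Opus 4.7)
The plan is to apply Theorem~\ref{thm: two maps preserving trace linear} with the choice $V_1 = V_2 = V$ and $W_1 = W_2 = W$. Hypothesis~(1) of that theorem is trivial and hypothesis~(4) is exactly the assumption \eqref{matrix algebra trace preserver}. Hypothesis~(2) is implicit in the setup: closure of $V$ under Hermitian conjugate forces $V \subseteq M_n$ for some $n$ (since $A \in V$ and $A^* \in V$ together require $A$ to be square), and for the trace in \eqref{matrix algebra trace preserver} to be well-defined we must have $\phi(A)\psi(B)$ square, so elements of $W$ live in some common $M_p$. The only real content is the non-degeneracy hypothesis~(3).

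To verify (3), I would first observe that $\langle V \rangle$ inherits closure under Hermitian conjugate from $V$: regardless of whether the ambient field is $\R$ or $\C$, if $A = \sum_i c_i A_i$ with $A_i \in V$, then $A^* = \sum_i \overline{c_i}\, A_i^* \in \langle V \rangle$ since each $A_i^* \in V$. Now given $A \in \langle V\rangle$ with $\tr(AB) = 0$ for every $B \in \langle V\rangle$, choosing the legal test element $B = A^*$ yields $\tr(AA^*) = 0$, which is the squared Frobenius norm of $A$ and therefore forces $A = 0$.

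With all four hypotheses in hand, Theorem~\ref{thm: two maps preserving trace linear} immediately produces injective linear extensions $\widetilde\phi, \widetilde\psi : \langle V\rangle \to \langle W\rangle$ preserving trace of products, together with the dimension bound $\dim \langle V\rangle \le \min\{\dim\langle W\rangle,\dim\langle W\rangle\} = \dim\langle W\rangle$. I do not anticipate any real obstacle: the corollary is essentially a packaging of the earlier theorem, with the Hermitian-closure hypothesis supplying exactly the non-degeneracy of the Frobenius pairing on $\langle V\rangle$ that is needed to invoke it.
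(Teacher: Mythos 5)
Your proof is correct and is exactly the derivation the paper intends: the corollary is stated without proof as an immediate consequence of Theorem~\ref{thm: two maps preserving trace linear} applied with $V_1=V_2=V$ and $W_1=W_2=W$, where the Hermitian-closure of $V$ supplies the non-degeneracy condition~(3) via $\tr(AA^*)=0\Rightarrow A=0$, precisely as you argue.
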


\section{Maps on $M_n$}

 Corollary \ref{thm: trace preserver on special spaces}  shows that every linear bijection $\phi:M_n\to M_n$ corresponds to another linear bijection $\psi:M_n\to M_n$ such that
$\tr(\phi(A)\psi(B))=\tr(AB)$ for all $A, B\in M_n$.
  If there are three or more maps $M_n\to M_n$  preserve  trace of products,
they can be explicitly described as follow.

\begin{theorem}
\label{thm: M_n trace preserve m product}
Let $m,n\in\Z_+$ and $m\ge 3$. Maps $\phi_1,\ldots,\phi_m: M_n\to M_n$ satisfy that
\begin{equation}\label{eq M_n trace preserve m product}
\tr(\phi_1(A_1)\cdots\phi_m(A_m))=\tr(A_1\cdots A_m),\qquad \forall A_1,\ldots, A_m\in M_n,
\end{equation}
if and only if there are invertible matrices $N_1,\ldots,N_m\in M_n$ and $N_{m+1}:=N_1$ such that
\begin{equation}\label{formula M_n trace preserve m product}
\phi_i(A)=N_i A N_{i+1}^{-1}, \quad\forall A\in M_n.
\end{equation}
\end{theorem}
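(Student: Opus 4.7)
The plan is as follows. The forward direction is a direct telescoping: with $M_i = N_{i+1}^{-1}$ the product $\phi_1(A_1) \cdots \phi_m(A_m)$ collapses to $N_1 (A_1 \cdots A_m) N_1^{-1}$, whose trace is $\tr(A_1 \cdots A_m)$. For the converse I would first show every $\phi_i$ is a linear bijection of $M_n$. Specializing $A_k = I$ for $k \notin \{i, i+1\}$ in \eqref{eq M_n trace preserve m product} and using cyclicity of trace produces $\tr(\phi_i(A) \phi_{i+1}(B) R_i) = \tr(AB)$ for all $A, B \in M_n$, where $R_i$ is the cyclic product of the $\phi_k(I)$ with $k \notin \{i, i+1\}$. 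Setting $\psi_i(B) := \phi_{i+1}(B) R_i$ and invoking Corollary \ref{thm: trace preserver on special spaces} on the pair $(\phi_i, \psi_i)$ forces both to be linear bijections of $M_n$; surjectivity of $\psi_i$ then forces $R_i$ to be invertible, whence $\phi_{i+1} = \psi_i(\cdot) R_i^{-1}$ is itself linear and bijective. Running $i$ cyclically delivers linearity and bijectivity of every $\phi_i$ together with invertibility of every $\phi_k(I)$.

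The central step is to extract the multiplicative form of each $\phi_i$. I focus on $\phi_1$, with the others obtained by cyclic rotation of the relation. Specializing $A_k = I$ for $k \geq 4$ yields
\begin{equation*}
\tr(\phi_1(A)\phi_2(B)\phi_3(C)F) = \tr(ABC), \qquad A,B,C \in M_n,
\end{equation*}
with $F := \phi_4(I) \cdots \phi_m(I)$ invertible. Writing $\phi_1^*$ for the trace-adjoint defined by $\tr(\phi_1(A) X) = \tr(A \phi_1^*(X))$ and using non-degeneracy of the trace pairing, this becomes $\phi_1^*(\phi_2(B)\phi_3(C)F) = BC$. Reparametrizing $X := \phi_2(B)$ and $Y := \phi_3(C)F$ (which range independently over $M_n$) gives
\begin{equation*}
\phi_1^*(XY) = \phi_2^{-1}(X)\,\phi_3^{-1}(YF^{-1}), \qquad X,Y \in M_n.
\end{equation*}
The specializations $X = I$ and $Y = I$, together with bijectivity of $\phi_1^*$, force $\phi_2^{-1}(I)$ and $\phi_3^{-1}(F^{-1})$ to be invertible, and eliminating them produces $\phi_1^*(XY) = \phi_1^*(X)\,\phi_1^*(I)^{-1}\,\phi_1^*(Y)$. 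Hence $U(X) := \phi_1^*(I)^{-1}\phi_1^*(X)$ is a unital bijective multiplicative linear map on $M_n$, i.e., a $\C$-algebra automorphism, and by the Skolem--Noether theorem is inner: $U(X) = PXP^{-1}$ for some invertible $P$. Transcribing back gives $\phi_1(A) = N_1 A M_1$ with $N_1, M_1$ invertible, and cyclic rotation of the three-variable specialization yields $\phi_i(A) = N_i A M_i$ for every $i$.

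Substituting $\phi_i(A) = N_i A M_i$ into \eqref{eq M_n trace preserve m product} and invoking cyclic invariance produces
\begin{equation*}
\tr(A_1 P_1 A_2 P_2 \cdots A_m P_m) = \tr(A_1 A_2 \cdots A_m), \quad P_i := M_i N_{i+1} \ (\text{indices mod } m),
\end{equation*}
for all $A_j \in M_n$. Testing with two consecutive $A_j, A_{j+1}$ running over matrix units and the remaining $A_k = I$ forces each $P_i$ to be a scalar matrix $c_i I$ with $c_1 \cdots c_m = 1$. The rescaling $\widetilde N_i := (c_1 \cdots c_{i-1})^{-1} N_i$ absorbs the $c_i$ while preserving $\widetilde N_{m+1} = \widetilde N_1$ (using $\prod c_i = 1$), giving $\phi_i(A) = \widetilde N_i A \widetilde N_{i+1}^{-1}$ as claimed. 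The hard part is the middle paragraph: carefully tracking the invertibility claims that precede Skolem--Noether, and observing that the derived equation $\phi_1^*(XY) = \phi_1^*(X) \phi_1^*(I)^{-1} \phi_1^*(Y)$ is what rules out the transpose form $A \mapsto N A^t M$ (which would instead force $U$ to be anti-multiplicative).
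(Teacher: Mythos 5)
Your argument is correct. It shares the paper's skeleton---first force linearity and bijectivity of every $\phi_i$ via Corollary \ref{thm: trace preserver on special spaces}, then reduce to the fact that a unital multiplicative linear bijection of $M_n$ is inner---but the execution differs at the two key steps. The paper uses surjectivity of $\phi_1$ to upgrade the trace identity to the matrix identity $\phi_2(B_2)\cdots\phi_m(B_m)=\phi_2(B_2\cdots B_m)\phi_3(I)\cdots\phi_m(I)$, specializes to $\phi_2(B_2)\phi_3(B_3)=\phi_2(B_2B_3)\phi_3(I)$, concludes that $A\mapsto\phi_2(A)\phi_2(I)^{-1}$ is an automorphism, and then defines $N_3:=\phi_2(I)^{-1}N_2$, $N_4:=\phi_3(I)^{-1}N_3$, etc.\ recursively, so that the cyclic condition and the scalars are absorbed automatically. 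You instead stay at the level of trace identities and push everything through the trace-adjoint $\phi_1^*$, arriving at the same functional equation $f(XY)=f(X)f(I)^{-1}f(Y)$ for $f=\phi_1^*$; this costs you the extra invertibility checks on $\phi_2^{-1}(I)$ and $\phi_3^{-1}(F^{-1})$ (which do go through, since otherwise the image of $X\mapsto\phi_2^{-1}(X)\phi_3^{-1}(F^{-1})$ would lie in a proper subspace, contradicting surjectivity of $\phi_1^*$), plus a final matrix-unit computation forcing each $M_iN_{i+1}$ to be scalar and a rescaling of the $N_i$. Both routes are sound; the paper's recursion is slightly leaner on bookkeeping, while your version makes explicit where the anti-multiplicative (transpose) alternative is excluded and isolates the scalar ambiguity cleanly at the end.
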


\begin{proof} It suffices to prove that \eqref{eq M_n trace preserve m product} implies \eqref{formula M_n trace preserve m product}.
Let $\psi_1(B):= \phi_2(B)\phi_3(I)\cdots\phi_m(I)$. Then $\tr(\phi_1(A)\psi_1(B))=\tr(AB)$ for $A, B\in M_n$.
By Corollary \ref{thm: trace preserver on special spaces}, $\phi_1$ and $\psi_1$ are linear bijections. So $\phi_3(I),\cdots, \phi_m(I)$ are invertible.
Similarly, all $\phi_i$'s are linear bijections and all $\phi_i(I)$'s are invertible.

For all $A, B_2,\ldots,B_m\in M_n$ we have
$$
\tr(\phi_1(A)\phi_2(B_2)\cdots\phi_m(B_m))=\tr(AB_2\cdots B_m)=\tr(\phi_1 (A)\psi_1(B_2\cdots B_m)).
$$
Since $\Ima \phi_1=M_n$, we get
\begin{equation}
\phi_2(B_2)\cdots\phi_m(B_m)=\psi_1(B_2\cdots B_m)=\phi_2(B_2\cdots B_m)\phi_3(I)\cdots\phi_m(I).
\end{equation}
Let $B_4=\cdots=B_m=I$ then
\begin{equation}\label{M_n product preserve trace 2 n 3}
\phi_2(B_2)\phi_3(B_3)=\phi_2(B_2B_3)\phi_3(I).
\end{equation}
Let $B_2=I$ in \eqref{M_n product preserve trace 2 n 3}, then
$\phi_2(I)\phi_3(B_3)=\phi_2(B_3)\phi_3(I)$ so that $\phi_3(B_3)=\phi_2(I)^{-1}\phi_2(B_3)\phi_3(I)$.
So \eqref{M_n product preserve trace 2 n 3} becomes
$\phi_2(B_2)\phi_2(I)^{-1}\phi_2(B_3)=\phi_2(B_2B_3).$  Let $\widehat{\phi_2}(A):=\phi_2(A)\phi_2(I)^{-1}$ then
$\widehat{\phi_2}(I)=I$ and
\begin{equation}
\widehat{\phi_2}(A)\widehat{\phi_2}(B)=\widehat{\phi_2}(AB),\quad \forall A, B\in M_n.
\end{equation}
So $\widehat{\phi_2}$ is an algebra automorphism of $M_n$, which must be of the form $\widehat{\phi_2}(A)=N_2AN_2^{-1}$ for an invertible $N_2\in M_n$. Let
$N_3:=\phi_2(I)^{-1}N_2$ and $N_4:=\phi_3(I)^{-1}N_3$. Then
\begin{eqnarray}
\phi_2(A) &=& \widehat{\phi_2}(A)\phi_2(I)=N_2 AN_3^{-1},
\\
\phi_3(A) &=& \phi_2(I)^{-1}\phi_2(A)\phi_3(I)= \phi_2(I)^{-1}N_2AN_3^{-1}\phi_3(I)=N_3 AN_4^{-1}.
\end{eqnarray}
Similarly, there are invertible $N_5,\ldots, N_m, N_{m+1}$ and $N_1:=N_{m+1}$ such that $\phi_i(A)=N_i A N_{i+1}^{-1}$ for all $i=1,\ldots,m$.
\end{proof}

\begin{remark}
Theorem \ref{thm: M_n trace preserve m product} also holds if all $M_n$ are replaced by $M_n(\R)$, and the proof is analogous.
\end{remark}

The following two examples show that two maps $M_n\to M_n$ preserving trace of product may not be of the form \eqref{formula M_n trace preserve m product}.

\begin{example}\label{Ex: Hadamard}
   Suppose that $\phi$ and $\psi$ are two maps on $M_n$ which are defined by $\phi(A) = A \circ C, \quad \psi (A)=A\circ \hat{C},$ where $C=(c_{ij})\in S_n(\mathbb{R})$ and $\hat{C}=(\frac{1}{c_{ij}})$ with $c_{ij}\neq0$ for all $i,j$. Here, $A\circ C$ denotes the Hadamard product of $A$ and $C$. Then the maps $\phi$ and $\psi$ send from $H_n$ to $H_n$ and $S_n$ to $S_n$. Moreover, one can show that $\phi$ and $\psi$ satisfy the equality $$\tr \phi(A)\psi(B)=\tr AB, \quad \forall A, B\in M_n.$$
\end{example}

\begin{example}\label{Ex: PQ}
  Let $A_1, \ldots, A_n, B_1, \ldots, B_n\in M_n$ be invertible matrices with $A_iB_i=I_n$ for all $i=1, \ldots, n$. Let $\{P_{ij}: 1\leq i, j\leq n \}$ and $\{Q_{ij}: 1\leq i, j\leq n \}$ be the subsets of $M_n$, where $P_{ij}=E_{ij}A_i$ and $Q_{ij}=B_jE_{ij}$. Define two maps $\phi$ and $\psi$ on $M_n$ by
  $$\phi(\sum_{i,j=1}^nc_{ij}E_{ij})=\sum_{i,j=1}^nc_{ij}P_{ij}, \quad \psi(\sum_{i,j=1}^nd_{ij}E_{ij})=\sum_{i,j=1}^nd_{ij}Q_{ij}.$$
   Then one can show that
   $$\tr \phi(A)\psi(B)=\tr AB, \quad \forall A, B\in M_n.$$
\end{example}

\subsection{Linear Maps $M_n\to M_k$}

Now let  $m\ge 2$ and we further consider linear maps $\phi_1,\ldots,\phi_m: M_n\to M_k$ preserving trace of products:
\be{equation}
{\label{M_n->M_k}
\tr(\phi_1(A_1)\phi_2(A_2)\cdots\phi_m(A_m))=\tr(A_1A_2\cdots A_m),\quad A_1,A_2,\ldots,A_m\in M_n.
}

\be{theorem}
{\label{thm: trace preserving n>k impossible}
When $n>k$ and $m\ge 2$, there are no linear maps $\phi_1,\ldots,\phi_m: M_n\to M_k$ that satisfy \eqref{M_n->M_k}.
}

\be{proof}
{
 Suppose on the contrary,  there exist linear maps $\phi_1,\ldots,\phi_m: M_n\to M_k$ that satisfy
 $\tr(\phi_1(A_1)\cdots\phi_m(A_m))=\tr(A_1\cdots A_m)$ for all $A_1,\ldots,A_m\in M_n.$
Let
$$\psi(B):=\phi_2(B)\phi_3(I_n)\cdots\phi_m(I_n).$$
Then both $\phi_1$ and $\psi$ are linear maps and $\tr(\phi_1(A)\psi(B))=\tr(AB)$ for $A, B\in M_n$.
Let $\{B_1, B_2, \ldots, B_{n^2}\}$ be a basis of $M_n$. The matrix
$\mtx{\tr(B_i B_j)}_{n^2\times  n^2}$ has full rank $n^2$, but the matrix
$\mtx{\tr(\phi_1(B_i) \psi(B_j) )}_{n^2\times  n^2}$ has rank no more than $k^2<n^2$. This is a contradiction.
}

Let $0_n$ denote the zero matrix  of size $n\times n$.

\be{theorem}
{\label{thm: M_n to M_k 2 maps}
Suppose $n\le k$ and linear maps $\phi_1, \phi_2: M_n\to M_k$  satisfy \eqref{M_n->M_k} for  $m=2$:
\be{equation}{ \label{M_n->M_k m=2}
\tr(\phi_1(A)\phi_2(B))=\tr(AB),\quad A, B\in M_n.
}
Then there are linear bijections $\wt{\phi}_1, \wt{\phi}_2: M_k\to M_k$ such that
\be{eqnarray}
{\label{M_k->M_k extension m=2}
\tr(\wt{\phi}_1(\wt{A}) \wt{\phi}_2(\wt{B}))&=& \tr(\wt{A}  \wt{B}),\quad \wt{A},  \wt{B} \in M_k,
\\
\wt{\phi}_i(A\oplus 0_{k-n}) &=& \phi_i(A), \quad A\in M_n,\ i=1,2.
}
Moreover, if $\phi_1$ and $\phi_2$ both preserve Hermitian matrices, then
$\wt{\phi}_1$ and $\wt{\phi}_2$ can be chosen  to preserve Hermitian matrices.
}

\begin{proof}   Denote
\be{eqnarray}
{
V&:=&\{A\oplus 0_{k-n}: A\in M_n\},\\
W&:=&\left\{\mtx{0_n &B\\C &D}_{k\times k}: B\in M_{n,k-n}, C\in M_{k-n,n}, D\in M_{k-n}\right\}.
}
Then $M_k=V\oplus W$ and $\tr(\wt{A}\wt{B})=0$ for all $\wt{A}\in V$ and $\wt{B}\in W$.
Let $\{\wt{C_1},\ldots,\wt{C_{k^2-n^2}}\}$ be a basis of $W$. The matrix
$\mtx{\tr(\wt{C_p}\wt{C_q})}_{(k^2-n^2)\times (k^2-n^2)}$ is invertible.

For $i=1,2,$ let
\be{equation}{
X_i :=  \left\{\wt{B}\in M_k: \tr(\phi_i(A)\wt{B})=0\ \text{for all } A\in M_n\right\}.
}
The \eqref{M_n->M_k m=2}  implies that  $\Ima(\phi_1)\cap X_2=\Ima(\phi_2)\cap X_1=\{0_k\}$.
Corollary \ref{thm: trace preserving injection} shows that $\phi_1$ and $\phi_2$ are injections. Hence
\be{equation}{
M_k=\Ima(\phi_1)\oplus X_2=\Ima(\phi_2)\oplus X_1.
}
Fix a basis $\{\wt{E_1},\ldots,\wt{E_{k^2-n^2}}\}$ of $X_1$.
There is a unique basis $\{\wt{F_1},\ldots,\wt{F_{k^2-n^2}}\}$ of $X_2$ such that
\be{equation}
{\label{tr EF=tr CC}
\mtx{\tr(\wt{E_i}\wt{F_j})}_{(k^2-n^2)\times (k^2-n^2)}=\mtx{\tr(\wt{C_i}\wt{C_j})}_{(k^2-n^2)\times (k^2-n^2)}.
}
Let $\wt{\phi}_1, \wt{\phi}_2:M_k\to M_k$ be the linear maps defined by
\begin{itemize}
\item $\wt{\phi}_i(A\oplus 0_{k-n})=\phi_i(A)$ for $A\in M_n$ and $i=1,2$;
\item $\wt{\phi}_1(\wt{C_j})=\wt{E_j}$ and  $\wt{\phi}_2(\wt{C_j})=\wt{F_j}$ for $j=1,\ldots,k^2-n^2$.
\end{itemize}
Then $\wt{\phi}_1$ and $\wt{\phi}_2$ are linear bijections,   $\wt{\phi}_i(A\oplus 0_{k-n})=\phi_i(A)$  for
  $A\in M_n$ and $i=1,2,$ and \eqref{M_k->M_k extension m=2} holds.

Finally, suppose $\phi_1$ and $\phi_2$ both preserve Hermitian matrices.  Since $M_n$ has a basis consisting of Hermitian matrices, so do
$\Ima(\phi_1)$ and $\Ima(\phi_2)$. We can let the sets
$\{\wt{C_1},\ldots,\wt{C_{k^2-n^2}}\}$ and $\{\wt{E_1},\ldots,\wt{E_{k^2-n^2}}\}$ consist entirely of Hermitian matrices.
Then each $\wt{F_j}$ in the set $\{\wt{F_1},\ldots,\wt{F_{k^2-n^2}}\}$ satisfies that
$$\tr(\phi_2(A)\wt{F_j})=0\in\R,\quad \tr(\wt{E_i}\wt{F_j})=\tr(\wt{C_i}\wt{C_j})\in\R,
$$
which is possible only when each $\wt{F_j}$ is a Hermitian matrix.  Therefore, the maps
$\wt{\phi}_1$ and $\wt{\phi}_2$ constructed above preserve Hermitian matrices.
\end{proof}

\begin{example}\label{M_n-M_k m=3 counterexample}
Analogous linear map extension result of Theorem \ref{thm: M_n to M_k 2 maps} for $m>2$ does not hold. Consider $k=2n$ and $m=3$. Let $X\in M_n$ be an arbitrary nonscalar matrix. Let
$\phi_1, \phi_2, \phi_3: M_n\to M_{2n}$ such that
\be{equation*}
{
\phi_1(A)=A\oplus 0_n,\quad
\phi_2(B)=B\oplus B,\quad
\phi_3(C)=C\oplus XCX^*,\quad
A, B, C\in M_n.
}
Obviously,
$$
\tr(\phi_1(A)\phi_2(B)\phi_3(C))=\tr(ABC),\quad A, B, C\in M_n.
$$
Suppose there are linear bijections $\wt{\phi}_1, \wt{\phi}_2, \wt{\phi}_3:M_{2n}\to M_{2n}$
such that $\wt{\phi}_i(A\oplus 0_n)=\phi_i(A)$ for $A\in M_n$ and $i=1,2,3,$ and
$$
\tr(\wt{\phi}_1(\wt{A})\wt{\phi}_2(\wt{B})\wt{\phi}_3(\wt{C}))=\tr(\wt{A}\wt{B}\wt{C}),\quad \wt{A}, \wt{B}, \wt{C} \in M_{2n}.
$$
Then by \eqref{M_n product preserve trace 2 n 3}, it is necessary that for all $B, C\in M_n$,
\be{eqnarray}
{\notag
\wt{\phi}_2(B\oplus 0_n )\wt{\phi}_3(C\oplus 0_n  ) &=& \wt{\phi}_2(BC\oplus 0_n ) \wt{\phi}_3(I_{2n}),\quad\text{that is,}
\\ \label{M_n-M_k m=3 problem}
(B\oplus B)(C\oplus XCX^*)=BC\oplus BXCX^* &=& (BC\oplus BC) \wt{\phi}_3(I_{2n}).
}
However, since $X$ is selected from  nonscalar matrices, no matrix $\wt{\phi}_3(I_{2n})\in M_{2n}$
can satisfy \eqref{M_n-M_k m=3 problem} for all $B, C\in M_n.$
So the statement in Theorem \ref{thm: M_n to M_k 2 maps} is not true if $m>2$.
This example also indicates that analogous statement on maps $H_n\to H_k$  is not true for $m>2$ either.
\end{example}



\section{Maps on $H_n$ and $P_n$}

In this section, we investigate maps $H_n\to H_n$ and $P_n\to \ol{P_n}$ that preserve a trace of products property.  These two types of maps lead to different results, especially in the trace of products of two maps.


\subsection{Maps $H_n\to H_n$ preserving trace of products}

By Corollary \ref{thm: trace preserver on special spaces}, every linear bijection $\phi: H_n\to H_n$
corresponds to another linear bijection $\psi: H_n\to H_n$ such that
 $\tr(\phi(A)\psi(B))=\tr(AB)$ for all $A, B\in H_n$.
Some examples can be analogously developed from Example \ref{Ex: Hadamard}.

We  investigate the problem of three or more maps $H_n\to H_n$ preserving trace of products.

\begin{theorem}\label{thm: H_n trace preserve m product}
Suppose $m\ge 3$. Maps $\phi_1,\ldots,\phi_m: H_n\to H_n$ satisfy that
\begin{equation}\label{eq H_n trace preserve m product}
\tr(\phi_1(A_1)\cdots\phi_m(A_m))=\tr(A_1\cdots A_m),\quad \forall A_1,\ldots,A_m\in H_n,
\end{equation}
if and only if  they can be described as follow:
\begin{enumerate}
\item When $m$ is odd, there exist a unitary matrix $U\in M_n$ and nonzero real scalars $c_1,\ldots, c_m$ such that $c_1\cdots c_m=1$ and
\begin{equation}\label{H_n trace preserving multiple products odd}
\phi_i(A)=c_i U^*AU,\quad \forall A\in H_n,\ i=1,\ldots,m.
\end{equation}

\item When $m$ is even, there exist an invertible $M\in M_n$ and nonzero real scalars $c_1,\ldots, c_m$ such that $c_1\cdots c_m=1$ and
\begin{equation}\label{H_n trace preserving multiple products even}
\phi_i(A)=
\begin{cases}
c_i M^*AM, &\text{$i$ is odd,}
\\
c_i M^{-1}AM^{-*}, &\text{$i$ is even,}
\end{cases}
\quad \forall A\in H_n.
\end{equation}
\end{enumerate}
\end{theorem}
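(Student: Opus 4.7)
The converse direction is a direct verification from the cyclic property of trace together with $c_1\cdots c_m=1$, so I focus on the forward implication. The plan is to lift the problem from $H_n$ to $M_n$ and invoke Theorem \ref{thm: M_n trace preserve m product}. Every $Z\in M_n$ has a unique decomposition $Z=X+\im Y$ with $X,Y\in H_n$, so I define $\Phi_i:M_n\to M_n$ by $\Phi_i(X+\im Y):=\phi_i(X)+\im\phi_i(Y)$. Expanding $\Phi_1(Z_1)\cdots\Phi_m(Z_m)$ as the natural $2^m$-fold sum and applying the hypothesis to each Hermitian summand yields
\begin{equation*}
\tr(\Phi_1(Z_1)\cdots\Phi_m(Z_m))=\tr(Z_1\cdots Z_m),\qquad Z_1,\ldots,Z_m\in M_n.
\end{equation*}
Theorem \ref{thm: M_n trace preserve m product} then supplies invertible matrices $N_1,\ldots,N_m$ with $N_{m+1}:=N_1$ such that $\Phi_i(A)=N_iAN_{i+1}^{-1}$; as a byproduct each $\phi_i$ is automatically $\R$-linear.

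Next I translate the constraint $\phi_i(H_n)\subseteq H_n$ into information on the $N_i$. For every $A\in H_n$ the identity $N_iAN_{i+1}^{-1}=(N_iAN_{i+1}^{-1})^*=N_{i+1}^{-*}AN_i^*$ rearranges to $N_{i+1}^*N_iA=AN_i^*N_{i+1}$ for all $A\in H_n$, hence for all $A\in M_n$ by $\R$-linear extension. Plugging $A=I$ shows $N_{i+1}^*N_i=N_i^*N_{i+1}$ is Hermitian, and this Hermitian matrix commuting with every $A\in M_n$ forces $N_{i+1}^*N_i=\beta_iI$ for some $\beta_i\in\R\setminus\{0\}$. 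Rearranging gives the recursion $N_{i+1}=\beta_iN_i^{-*}$.

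Iterating this recursion against the boundary condition $N_{m+1}=N_1$ separates the two cases. When $m$ is odd, iteration yields $N_1=\gamma N_1^{-*}$ with $\gamma\in\R$, i.e.\ $N_1N_1^*=\gamma I$; since $N_1N_1^*$ is positive definite, $\gamma>0$ and $N_1=\sqrt{\gamma}\,U$ for a unitary $U$. Propagating back, every $N_i$ becomes a real scalar multiple of $U$, so $\phi_i(A)=N_iAN_{i+1}^{-1}$ collapses to $c_iU^*AU$ (after renaming $U\mapsto U^*$), and the cyclic telescoping of the scalars gives $c_1\cdots c_m=1$. When $m$ is even the boundary condition is compatible with any invertible $N_1$ provided a single scalar identity among the $\beta_i$ holds; taking $M^*:=N_1$, the alternating pattern of the recursion produces exactly $\phi_i(A)=c_iM^*AM$ for odd $i$ and $\phi_i(A)=c_iM^{-1}AM^{-*}$ for even $i$, again with $c_1\cdots c_m=1$. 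The main subtlety is keeping the scalar cascade in $\R$ throughout — this is precisely what distinguishes the Hermitian case from the general $M_n$ setting, where no reality constraint is imposed on the analogous scalars.
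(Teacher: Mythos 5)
Your proof is correct and follows essentially the same route as the paper: complexify to $M_n$, invoke Theorem \ref{thm: M_n trace preserve m product} to get $\Phi_i(A)=N_iAN_{i+1}^{-1}$, and then use $\phi_i(H_n)\subseteq H_n$ together with the cyclic condition $N_{m+1}=N_1$ to force $N_{i+1}^*N_i=\beta_i I$ with $\beta_i\in\R$ and split into the odd/even cases. The only (harmless) difference is that you build the extension explicitly via the Cartesian decomposition and verify the $M_n$ trace identity by direct $2^m$-fold expansion, whereas the paper first obtains linearity from Theorem \ref{thm: two maps preserving trace} and then appeals to multilinearity; your commutation argument for $N_{i+1}^*N_i=\beta_iI$ also supplies a detail the paper merely asserts.
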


\begin{proof} It suffices to prove that \eqref{eq H_n trace preserve m product} implies \eqref{H_n trace preserving multiple products odd} (when $m$ is odd) or \eqref{H_n trace preserving multiple products even}  (when $m$ is even).

Let $\psi_1: H_n\to M_n$ be defined by $$\psi_1(B)=\phi_2(B)\phi_3(I)\cdots \phi_m(I),\qquad B\in H_n.$$
The \eqref{eq H_n trace preserve m product} implies that
$\tr(\phi_1(A)\psi_1(B))=\tr(AB)$ for $A,B\in H_n.$ By Theorem \ref{thm: two maps preserving trace} and the fact that the complex span of $H_n$ is $M_n$, $\phi_1$ can be extended to a linear bijection $\wt{\phi}_1$ on $M_n$. Similarly, each $\phi_i$ $(1\le i\le m)$ can be extended to a linear bijection $\wt{\phi}_i$ on $M_n$.

Now both sides of \eqref{eq H_n trace preserve m product} can be viewed as  multilinear functions on $(A_1,\ldots,A_m)\in {H_n}^{\times m}$.
Therefore,  $\wt{\phi}_1,\ldots, \wt{\phi}_m$ satisfy equality \eqref{eq M_n trace preserve m product}
after replacing each $\phi_i$ with $\wt{\phi}_i$.  Theorem \ref{thm: M_n trace preserve m product} says that there are invertible $N_1,\ldots, N_m\in M_n$ and $N_{m+1}:=N_1$ such that
each $\wt{\phi}_i (A)=N_iA N_{i+1}^{-1}$ for $A\in M_n$.  So for all $A\in H_n$, $\phi_i(A)=N_iA N_{i+1}^{-1}\in H_n$.  This could happen if and only if
$N_{i+1}^{-1}=c_i' N_i^{*}$ for a nonzero real scalar $c_i'$, in which $\phi_i(A)=c_i' N_iAN_i^*$. Let $M:=N_1^*$. Then
$\phi_1(A)=c_1 M^*AM$ for a nonzero real number $c_1$, and $\phi_2(A)=c_2 M^{-1}AM^{-*}$, $\phi_3(A)=c_3 M^*AM$, and so on.
\begin{enumerate}
\item When $m$ is odd, $\phi_m (A)=c_m M^*AM=N_m A N_{m+1}^{-1}=N_m A N_1^{-1}=N_m A M^{-*}.$ So $M=cM^{-*}$ or $MM^*=cI$ for a scalar $c$. By adjusting the scalar, we may assume that $U:=M$ is unitary, and $\phi_i$ are given by \eqref{H_n trace preserving multiple products odd}. Then necessarily $c_1\cdots c_m=1$.
\item When $m$ is even, we get the form \eqref{H_n trace preserving multiple products even}. Clearly  $c_1\cdots c_m=1$.
\qedhere
\end{enumerate}
\end{proof}


\begin{remark}
Real Hermitian matrices are  real symmetric matrices.
The real version of Theorem \ref{thm: H_n trace preserve m product} also holds.  See Theorem \ref{thm: S_n(R) trace preserve m product} on $S_n(\R)$.
\end{remark}

\subsection{Maps $P_n\to \ol{P_n}$ preserving trace of products}

Now consider the maps $P_n\to\ol{P_n}$ that preserve trace of products.
Many properties  of these maps can be generalized to their counterparts for the maps $P_n\to P_n$
and $\ol{P_n}\to\ol{P_n}$.  Note that linear maps $\ol{P_n}\to\ol{P_n}$ are positive maps,
which are popular and useful in quantum information theory.

Unlike the situations in $M_n\to M_n$ and $H_n\to H_n$, two maps
$\phi,\psi: P_n\to\ol{P_n}$ (similarly for $P_n\to P_n$ and $\ol{P_n}\to\ol{P_n}$) preserving trace of products have the following special forms.

\begin{theorem}\label{thm: P_n preserve trace}
Two maps $\phi:P_n\to \overline{P_n}$ and $\psi:P_n\to \overline{P_n}$  satisfy
\begin{equation}\label{trace preserver P_n}
\tr (\phi(A)\psi(B))=\tr(AB),\quad \forall A, B\in P_n,
\end{equation}
if and only if the following claims hold:
\begin{enumerate}
\item If   $\phi(I)=I$, then there exists a unitary matrix $U\in M_n$ such that
\begin{equation}\label{Pn to Pn preserve trace and I}
\phi(A)=\psi(A)=U^* AU \quad\text{or}\quad \phi(A)=\psi(A)=U^* A^t U,\quad \forall A\in P_n.
\end{equation}

\item In general, there exists an invertible $M\in M_n$ such that for all $A\in P_n$:
\begin{subequations}\label{Pn to Pn preserve trace}
\begin{eqnarray}
&\phi(A)=M^*AM,\  \psi(A)=M^{-1} AM^{-*};&\ \text{or \ }
\\
&\phi(A)=M^*A^tM,\ \psi(A)=M^{-1} A^tM^{-*}.&
\end{eqnarray}
\end{subequations}
\end{enumerate}
\end{theorem}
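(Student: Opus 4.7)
The plan is to extend $\phi$ and $\psi$ to linear bijections on all of $H_n$, show that both they and their inverses preserve the cone $\ol{P_n}$, and then invoke the classical classification of cone automorphisms of $\ol{P_n}$. First, since $P_n$ is closed under Hermitian conjugation with real linear span $H_n$, applying Corollary~\ref{thm: trace preserver on special spaces} with $\F=\R$ extends $\phi$ and $\psi$ to $\R$-linear bijections $\wt{\phi},\wt{\psi}:H_n\to H_n$ satisfying $\tr(\wt{\phi}(A)\wt{\psi}(B))=\tr(AB)$ for all $A,B\in H_n$. Since each extension is continuous and $P_n$ is dense in $\ol{P_n}$, one obtains $\wt{\phi}(\ol{P_n})\subseteq \ol{P_n}$ and $\wt{\psi}(\ol{P_n})\subseteq \ol{P_n}$.

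Next I would show that the inverses also preserve $\ol{P_n}$. Suppose $\wt{\phi}(A)\in \ol{P_n}$ while $A\in H_n$ has a negative eigenvalue with unit eigenvector $v$; then $B:=vv^*\in \ol{P_n}$ gives $\tr(AB)=v^*Av<0$, whereas $\tr(\wt{\phi}(A)\wt{\psi}(B))\geq 0$ since both factors lie in $\ol{P_n}$. This contradicts the trace identity, so $A\in \ol{P_n}$, and analogously for $\wt{\psi}$. Thus $\wt{\phi}$ is an $\R$-linear automorphism of the cone $\ol{P_n}$, and by the classical structure theorem for such automorphisms, there is an invertible $M\in M_n$ with $\wt{\phi}(A)=M^*AM$ or $\wt{\phi}(A)=M^*A^tM$. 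Substituting each case into the trace identity and using cyclicity of trace together with the non-degeneracy of the pairing $(X,Y)\mapsto \tr(XY)$ on $H_n$, one solves for $\wt{\psi}$: in the first case $\wt{\psi}(B)=M^{-1}BM^{-*}$ and in the second $\wt{\psi}(B)=M^{-1}B^tM^{-*}$, yielding \eqref{Pn to Pn preserve trace}. When $\phi(I)=I$, the relation $M^*M=I$ forces $M=U$ unitary, whence $\psi=\phi$ and \eqref{Pn to Pn preserve trace and I} follows.

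The main obstacle is the structural step invoking the classification of cone automorphisms. To keep the argument self-contained, one can reprove it by noting that $\wt{\phi}$ permutes the extreme rays of $\ol{P_n}$ (the rank-one positive semidefinite matrices), writing $\wt{\phi}(vv^*)=f(v)f(v)^*$ for some $f:\C^n\to \C^n$, and combining the trace identity $\tr(vv^*\cdot ww^*)=|\langle v,w\rangle|^2$ with Wigner-type rigidity to force $f$ to be, up to a scalar, either a linear or a conjugate-linear isomorphism, from which the two alternative forms for $\wt{\phi}$ (and hence for $\wt{\psi}$) follow.
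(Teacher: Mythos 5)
Your proposal is correct in its main line, but it follows a genuinely different route from the paper. After the common first step (extending $\phi,\psi$ to $\R$-linear bijections of $H_n$ --- note that strictly you should invoke Theorem~\ref{thm: two maps preserving trace} rather than Corollary~\ref{thm: trace preserver on special spaces}, since the codomain $\ol{P_n}$ differs from the domain $P_n$; the spans over $\R$ are all $H_n$, so the hypotheses are met), you prove that $\wt{\phi}$ and $\wt{\psi}$ preserve the cone $\ol{P_n}$ in both directions and then appeal to the classical Schneider-type classification of linear automorphisms of the positive semidefinite cone to get $\wt{\phi}(A)=M^*AM$ or $M^*A^tM$, after which $\wt{\psi}$ is determined by non-degeneracy of the trace pairing. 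The paper instead first treats the unital case directly: for a rank-one projection $P$ it chains $1=\tr(\phi(P)\psi(P))\le[\tr(\phi(P)^2)\tr(\psi(P)^2)]^{1/2}\le\tr\phi(P)\tr\psi(P)$, uses a resolution of the identity to force $\tr\phi(P)=1$, concludes from the equality cases that $\phi(P)=\psi(P)$ is a rank-one projection, hence $\phi=\psi$, and then cites \cite[Lemma 3.6]{Huang16}; the general case is reduced to the unital one by showing $\phi(I)$ is invertible (by an argument very close to your negative-eigenvalue test). Both proofs ultimately lean on one external structural result --- yours on the cone-automorphism theorem, the paper's on the Huang--Liu--Tsai--Szokol--Zhang lemma --- so neither is more self-contained; your version has the advantage of exposing the order-isomorphism structure explicitly, while the paper's Cauchy--Schwarz argument yields the extra information $\phi=\psi$ in the unital case with bare hands.

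One caveat about your closing sketch of a self-contained replacement for the cone-automorphism theorem: Wigner-type rigidity requires the single-map identity $\tr(\phi(P)\phi(Q))=\tr(PQ)$ on rank-one projections, whereas your hypothesis only gives the two-map identity $\tr(\wt{\phi}(P)\wt{\psi}(Q))=\tr(PQ)$. To make that sketch work you would first have to show $\wt{\phi}$ and $\wt{\psi}$ agree (up to normalization) on rank-one projections --- which is precisely what the paper's Cauchy--Schwarz step accomplishes --- so as written that alternative is incomplete, though the main route through the cited classification theorem stands.
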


\begin{proof} The maps defined by \eqref{Pn to Pn preserve trace and I} and \eqref{Pn to Pn preserve trace} obviously satisfy
 \eqref{trace preserver P_n}. We shall prove the converse.
By Theorem \ref{thm: two maps preserving trace}, $\phi$ and $\psi$ can be extended to linear bijections $H_n\to H_n$, still denoted as $\phi$ and $\psi$ here,
that satisfy \eqref{trace preserver P_n}.

\begin{enumerate}
\item If  $\phi(I)=I$, then \eqref{trace preserver P_n} implies that $\tr \psi(A)=\tr A$ for $A\in P_n$.
Given a rank one projection $P$, we have $\phi(P),\psi(P)\in\overline{P_n}$ so that
\begin{eqnarray}
1&=&\tr P=\tr(P^2)=\tr(\phi(P)\psi(P))
\label{trace preserve rank 1 a}
\\
&\le&  [\tr (\phi(P)^2)\tr (\psi(P)^2)]^{1/2}
\label{trace preserve rank 1 b}
\\
&\le& \tr \phi(P)\tr \psi(P)=\tr \phi(P)\tr P=\tr \phi(P).
\label{trace preserve rank 1 c}
\end{eqnarray}
Let $E_{ij}\in M_n$ be the matrix with a sole nonzero value $1$ in the $(i,j)$ entry.
Write $P=WE_{11}W^*$ for a unitary $W$. Then $I$ is a sum of $n$ rank one projections
$I=P+P^{(2)}+\cdots+P^{(n)}$ in which $P^{(i)}:=WE_{ii}W^*$ for $2\le i\le n$.
\begin{equation}
n=\tr \phi(I)=\tr \phi(P)+\tr \phi(P^{(2)})+\cdots+\tr\phi(P^{(n)})\ge n.
\end{equation}
Therefore, $\tr\phi(P)=1$. The Cauchy-Schwartz inequality  \eqref{trace preserve rank 1 b} becomes equality  only if
$\psi(P)=c\phi(P)$ for some $c\ge 0$; the inequality in  \eqref{trace preserve rank 1 c} becomes equality only if both $\phi(P)$ and $\psi(P)$ are rank one.
Then $\tr \phi(P)=1$ and $\tr \psi(P)=\tr P=1$  imply that $\psi(P)=\phi(P)$ are rank one projections.

Since all rank one projections span $H_n$, we have $\phi(A)=\psi(A)$ for all $A\in P_n$. Then  \eqref{trace preserver P_n} becomes
\be{equation}{
\tr(\phi(A)\phi(B))=\tr(AB),\qquad \forall A, B\in P_n.
}
By \cite[Lemma 3.6]{Huang16}, there is a unitary $U$ such that $\phi=\psi$ are given by \eqref{Pn to Pn preserve trace and I}:
$$
\phi(A)=\psi(A)=U^*AU\qquad\text{or}\qquad \phi(A)=\psi(A)=U^*A^tU.
$$

\item In general, we first show that $\phi(I)$ is invertible.
Suppose on the contrary, $\phi(I)$ is not invertible.   Then $\phi(I)v=0$ for certain nonzero $v\in\C^n$.
By the linear bijectivity of $\psi$ on $H_n$, there exists a nonzero   $B\in H_n$ such that $\psi(B)=vv^*$.
If $B\in\ol{P_n}$ then
\be{equation}{
0=\tr(\phi(I)vv^*)=\tr(\phi(I)\psi(B))=\tr(B),
}
which contradicts $B\ne 0$. So $B\not\in\ol{P_n}$ and $B$ has a negative eigenvalue. Using spectral decomposition, there exists $A\in P_n$ such that $\tr(AB)<0.$ Then $\phi(A)\in \ol{P_n}$ and
\be{equation}{
0>\tr(AB)=\tr(\phi(A)\psi(B))\ge 0,
}
which is also a contradiction. Therefore, $\phi(I)\in\ol{P_n}$ must be invertible.

Define $\phi_1(A):=\phi(I)^{-1/2}\phi(A)\phi(I)^{-1/2}$ and $\psi_1(B):=\phi(I)^{1/2}\psi(B) \phi(I)^{1/2}$. Then  $\phi_1:P_n\to \overline{P_n}$ and $\psi_1: P_n\to \overline{P_n}$   satisfy
\eqref{trace preserver P_n} and $\phi_1(I)=I$. By preceding argument,  there is a unitary $U\in M_n$ such that $\phi_1(A)=\psi_1(A)=U^*AU$ or
$\phi_1(A)=\psi_1(A)=U^*A^t U$.
Therefore, let $M:=U\phi(I)^{1/2}$,  then either $\phi(A)=M^* AM$ and $\psi(A)=M^{-1} AM^{-*}$, or
$\phi(A)=M^*A^t M$ and $\psi(A)=M^{-1}A^t M^{-*}$. 
\qedhere
\end{enumerate}
\end{proof}

Next consider $m\ge 3$ maps $\phi_1,\ldots,\phi_m$ from $P_n$ to $\ol{P_n}$ that preserve trace of products. The following result can be proved by
Theorem  \ref{thm: H_n trace preserve m product}.  We will give another proof  by Theorem \ref{thm: P_n preserve trace}.

\begin{theorem} \label{thm: multiple product trace preserver}
Let $m\ge 3$.
Maps $\phi_i: P_n\to \ol{P_n}$ ($i=1,\ldots,m$)  satisfy that
\begin{equation}\label{trace preserving multiple products}
\tr (\phi_1(A_1)\cdots \phi_m(A_m))=\tr (A_1\cdots A_m),\quad\forall A_1,\ldots,A_m\in P_n,
\end{equation}
if and only if they are described as follow:
\begin{enumerate}
\item When $m$ is odd, there exist a unitary matrix $U\in M_n$ and scalars $c_1,\ldots, c_m\in\R_+$ such that $c_1\cdots c_m=1$ and
\begin{equation}\label{trace preserving multiple products odd}
\phi_i(A)=c_i U^*AU,\quad \forall A\in P_n,\ i=1,\ldots,m.
\end{equation}

\item When $m$ is even, there exist an invertible $M\in M_n$ and scalars $c_1,\ldots, c_m\in\R_+$ such that $c_1\cdots c_m=1$ and
\begin{equation}\label{trace preserving multiple products even}
\phi_i(A)=
\begin{cases}
c_i M^*AM, &\text{$i$ is odd,}
\\
c_i M^{-1}AM^{-*}, &\text{$i$ is even,}
\end{cases}
\quad \forall A\in P_n.
\end{equation}
\end{enumerate}
\end{theorem}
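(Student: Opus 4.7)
The ``if'' direction is a direct trace-cycling verification, so I focus on the ``only if'' direction, adopting the paper's stated strategy of reducing to the two-map case (Theorem~\ref{thm: P_n preserve trace}).

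First I plan to establish that each $\phi_j(I)$ lies in $P_n$ (i.e.\ is invertible). Fix $i$ and set $\psi_i(B) := \phi_{i+1}(B)\phi_{i+2}(I)\cdots \phi_{i-1}(I)$ (indices cyclic mod $m$). The hypothesis \eqref{trace preserving multiple products} rewrites as $\tr(\phi_i(A)\psi_i(B)) = \tr(AB)$ for $A, B \in P_n$. Because the complex span of $P_n$ is $M_n$ and the trace pairing on $M_n$ is nondegenerate, Theorem~\ref{thm: two maps preserving trace} extends $\phi_i$ and $\psi_i$ to complex-linear bijections $M_n \to M_n$. Bijectivity of $\psi_i$ forces every other $\phi_j(I)$ to be invertible, and since $\phi_j(I) \in \overline{P_n}$ we conclude $\phi_j(I) \in P_n$.

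Next I reduce to pairs. For $m = 3$, setting $A_3 = I$ in \eqref{trace preserving multiple products} and using cyclicity of the trace gives
\begin{equation*}
\tr\bigl(\phi_3(I)^{1/2}\phi_1(A_1)\phi_3(I)^{1/2}\,\phi_2(A_2)\bigr) = \tr(A_1 A_2).
\end{equation*}
The map $\tilde\phi_1(A) := \phi_3(I)^{1/2}\phi_1(A)\phi_3(I)^{1/2}$ sends $P_n$ into $\overline{P_n}$ (PSD sandwiched by PSD), so Theorem~\ref{thm: P_n preserve trace} yields $\tilde\phi_1(A) = M^*AM$ and $\phi_2(A) = M^{-1}AM^{-*}$ (or the transpose variant), which unwinds to an explicit formula for $\phi_1$. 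Rotating the ``identity slot'' to $A_1$ and to $A_2$ pins down $\phi_2$ and $\phi_3$ analogously. For $m \geq 4$, I induct on $m$: setting $A_m = I$ and absorbing $\phi_m(I)^{1/2}$ on both sides of $\phi_1(A_1)$ reduces \eqref{trace preserving multiple products} to a length-$(m-1)$ identity for $\tilde\phi_1, \phi_2, \ldots, \phi_{m-1}$ (with $\tilde\phi_1$ again a $P_n \to \overline{P_n}$ map); the inductive hypothesis handles these $m-1$ maps, and a symmetric reduction with $A_1 = I$ recovers $\phi_m$.

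The principal obstacle is consistency: Theorem~\ref{thm: P_n preserve trace} offers two branches (with or without transpose), and across the different pairs or inductive instances these branches must agree and the conjugating matrices must collapse to a single $M$ (or a single unitary $U$ when $m$ is odd). The parity of $m$ enters through the closure condition around the cycle: walking the relations $N_{i+1} \propto (N_i^*)^{-1}$ once around returns us to $N_1$ either through an even or an odd number of inversions, and in the odd case this forces $MM^* \propto I$, yielding the unitary $U$ of \eqref{trace preserving multiple products odd}; in the even case no such constraint appears, yielding \eqref{trace preserving multiple products even}. Once the common conjugator is identified, the real scalars $c_i$ are read off from the formulas, positivity $c_i > 0$ follows from $\phi_i(P_n) \subseteq \overline{P_n}$ together with injectivity, and $c_1 \cdots c_m = 1$ is obtained by specializing to $A_1 = \cdots = A_m = I$. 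As a cross-check, the same structure is predicted by the alternative route through Theorem~\ref{thm: H_n trace preserve m product} applied to the complex-linear extensions of the $\phi_i$ to $M_n$.
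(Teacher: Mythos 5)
Your overall plan (reduce to the two-map case and invoke Theorem \ref{thm: P_n preserve trace}, then propagate around the cycle and use parity for the closure condition) matches the paper's strategy, and your first paragraph on the invertibility of the $\phi_j(I)$ is fine. But the central reduction step is broken. Setting $A_3=I$ gives $\tr(\phi_1(A_1)\phi_2(A_2)\phi_3(I))=\tr(A_1A_2)$, and cyclicity of the trace only lets you rewrite this as $\tr\bigl(\phi_3(I)^{1/2}\,\phi_1(A_1)\phi_2(A_2)\,\phi_3(I)^{1/2}\bigr)$, with the two half-powers flanking the \emph{entire} product. It does not give $\tr\bigl(\phi_3(I)^{1/2}\phi_1(A_1)\phi_3(I)^{1/2}\,\phi_2(A_2)\bigr)$: to move one factor of $\phi_3(I)^{1/2}$ into the slot between $\phi_1(A_1)$ and $\phi_2(A_2)$ you would need it to commute with $\phi_1(A_1)$ for all $A_1$ (or with $\phi_2(A_2)$ for all $A_2$), and since these images span $M_n$ that forces $\phi_3(I)$ to be a scalar matrix --- which is exactly the nontrivial fact that has to be proved, not assumed. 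The same objection applies verbatim to your inductive step for $m\ge 4$, where you "absorb $\phi_m(I)^{1/2}$ on both sides of $\phi_1(A_1)$." So the reduction to Theorem \ref{thm: P_n preserve trace} is circular as written.

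The paper fills precisely this hole with a separate argument: after extending $\phi_1,\phi_2$ to linear bijections of $M_n$ (hence of $H_n$) via Theorem \ref{thm: two maps preserving trace}, it shows that $\tr\bigl(B\,\phi_2(A_2)C\bigr)\in\R$ for every Hermitian $B$, where $C:=\phi_3(I)\cdots\phi_m(I)$; writing $\phi_2(A_2)C=A'+\i A''$ with $A',A''$ Hermitian and testing against $B=A''$ forces $A''=0$, so $\phi_2(A_2)C\in H_n$, then $AC\in H_n$ for all $A\in H_n$, and hence $C=cI$ with $c>0$. Only after that does the product collapse to the two-map identity $\tr(\phi_i(A_i)\phi_{i+1}(A_{i+1}))=c_{i,i+1}\tr(A_iA_{i+1})$ to which Theorem \ref{thm: P_n preserve trace} applies. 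Your proposal needs this (or an equivalent) scalar-matrix lemma; the remainder of your sketch (branch consistency, ruling out the transpose variant for $m\ge3$, the parity dichotomy, positivity and normalization of the $c_i$) is then in line with the paper's proof.
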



\begin{proof}
It suffices to prove that \eqref{trace preserving multiple products} implies \eqref{trace preserving multiple products odd}  when $m$ is odd and \eqref{trace preserving multiple products even}  when $m$ is even.
 Let $C:=\phi_3(I)\cdots\phi_m(I)$.
In Theorem \ref{thm: two maps preserving trace}, let $\phi:P_n\to \ol{P_n}$ such that $\phi(A):=\phi_1(A)$, and $\psi: P_n\to \ol{P_n} C$ such that
$\psi(A):=\phi_2(A) C$.  The complex spanning space $\langle P_n\rangle=M_n$.
So $\phi$ and $\psi$ satisfy the assumptions of Theorem \ref{thm: two maps preserving trace}. It implies that $\phi_1$ and $\phi_2$ can be extended
to linear bijections $\wt{\phi_1} : M_n \to M_n$ and $\wt{\phi_2}: M_n\to M_n$ respectively, which necessarily map $H_n$ to $H_n$ bijectively.

Let $A_1, A_2\in P_n$ and $B_1\in H_n$. There exists $t\in\R_+$ such that $A_1+tB_1\in P_n$. \eqref{trace preserving multiple products} implies that
\begin{eqnarray*}
&&\tr(A_1A_2)+t\tr (\wt{\phi_1}(B_1)\phi_2(A_2)C)
\\
&=&
\tr (\phi_1 (A_1)\phi_2(A_2)C)+t\tr (\wt{\phi_1}(B_1)\phi_2(A_2)C)
\\
&=& \tr (\phi_1(A_1+tB_1)\phi_2(A_2)C)
\\
&=& \tr [(A_1+tB_1)A_2]\in\R.
\end{eqnarray*}
Therefore, $\tr (\wt{\phi_1}(B_1)\phi_2(A_2)C)\in\R$ for any Hermitian matrix $B_1$. The bijectivity of $\wt{\phi_1}|_{H_n}:H_n\to H_n$ implies that
$\tr(B\phi_2(A_2)C)\in\R$ for all $B\in H_n$.

Write $\phi_2(A_2)C=A'+{\mathbf i} A''$ for $A', A''\in H_n$. Then
$$
\tr (A''A')+{\mathbf i}\tr (A'' A'')=\tr(A''\phi_2(A_2)C)\in\R.
$$
So $\tr (A'' A'')=0$ and $A''=0$. Hence $\phi_2(A_2)C\in H_n$ for all $A_2\in P_n$.


Every Hermitian matrix is the difference of two positive definite matrices. Thus  for all $B\in H_n$ we have $\wt{\phi_2}(B)C\in H_n$.
The bijectivity of
$\wt{\phi_2}|_{H_n}:H_n\to H_n$ implies that $AC\in H_n$ for all $A\in H_n$.  This is true only when
$\phi_3(I)\cdots\phi_m(I)=C=cI$ for some $c\in\R$; and
 \eqref{trace preserving multiple products}  implies that $c\in\R_+$.
 Let $c_{1,2}:=\frac{1}{c}$. Then $c_{1,2}\in\R_+$ and
\eqref{trace preserving multiple products} becomes
\begin{equation}\label{trace multiple product with identities 2}
\tr (\phi_1(A_1) \phi_2(A_2))=c_{1,2}\tr (A_1A_2),\quad \forall A_1, A_2\in P_n.
\end{equation}

For each $i\in \{1,2,\ldots,m\}$, the identity \eqref{trace preserving multiple products} is equivalent to
\begin{equation}\label{trace preserving products}
\tr (\phi_i(A_i)\cdots \phi_m(A_m)\phi_1(A_1)\cdots \phi_{i-1}(A_{i-1}))=\tr (A_i\cdots A_mA_1\cdots A_{i-1})
\end{equation}
for all $A_1,\ldots,A_m\in P_n.$
Similarly, we can prove that
\begin{equation}
\phi_{i+2}(I)\cdots\phi_m(I)\phi_1(I)\cdots\phi_{i-1}(I)=\frac{1}{c_{i,i+1}} I
\end{equation}
for some  $c_{i,i+1}\in\R_+$. So \eqref{trace preserving products} implies that
\begin{equation}\label{trace preserving i, i+1 product}
\tr (\phi_i(A_i)\phi_{i+1}(A_{i+1}))=c_{i,i+1}\tr (A_iA_{i+1}),\quad\forall A_i, A_{i+1}\in P_n.
\end{equation}

Theorem \ref{thm: P_n preserve trace} and \eqref{trace multiple product with identities 2} indicate that
there exist an invertible $M\in M_n$ and $c_1, c_2\in\R_+$ such that 
\begin{eqnarray} \label{multiple product trace preserving 1}
&\phi_1(A)=c_1 M^*AM,\  \phi_2(A)=c_2 M^{-1} AM^{-*};&\ \text{or \ }
\\ \label{multiple product trace preserving 2}
&\phi_1(A)=c_1 M^*A^tM,\ \phi_2(A)=c_2  M^{-1} A^tM^{-*}.&
\end{eqnarray}

\begin{enumerate}
\item If \eqref{multiple product trace preserving 1} holds, then \eqref{trace preserving i, i+1 product} for $i=2$ implies that
\begin{eqnarray*}
\tr(A_2M^{-*}\phi_3(A_3)M^{-1})
&=& \tr(M^{-1}A_2M^{-*}\phi_3(A_3))
\\
&=& \frac{1}{c_2} \tr(\phi_2(A_2)\phi_3(A_3))=\frac{c_{2,3}}{c_2}\tr(A_2A_3)
\end{eqnarray*}
for all $A_2, A_3\in P_n$. So  $\phi_3(A)=c_3 M^*AM$ for some $c_3\in\R_+$. Similarly,
\begin{equation}
\phi_i(A)=\begin{cases} c_i M^*AM &\text{if \ } 2\nmid i,\\
c_i M^{-1}AM^{-*} &\text{if \ } 2\mid i,\end{cases}
\quad\text{for \ } i\in\{1,\ldots,m\}.
\end{equation}
If $m$ is even, then \eqref{trace preserving multiple products} implies that $c_1c_2\cdots c_m=1$; so \eqref{trace preserving multiple products even} holds.
If $m$ is odd,  then \eqref{trace preserving i, i+1 product} for $i=1$ and $i=m$ imply that there are $c_1, c_{m+1}\in\R_+$ such that
$c_1M^*AM=c_{m+1}M^{-1}AM^{-*}$ and thus
$$
c_1MM^*AMM^*=c_{m+1}A, \quad\forall A\in P_n.
$$
Hence $MM^*$ is a scalar matrix. We may adjust the matrix $M$ and the scalars $c_i$'s such that $U:=M$ is unitary, and $c_1\cdots c_m=1$.
So \eqref{trace preserving multiple products odd}  holds.

\item
If  \eqref{multiple product trace preserving 2} holds, then similarly we have
\begin{equation}\label{multiple product preserving false case}
\phi_i(A)=\begin{cases} c_i M^*A^tM &\text{if \ } 2\nmid i,\\
c_i M^{-1}A^t M^{-*} &\text{if \ } 2\mid i,\end{cases}
\quad\text{for \ } i\in\{1,\ldots,m\}.
\end{equation}
Moreover, $c_1\cdots c_m=1$, and $M$ is unitary when $m$ is odd. However, when $m\ge 3$,
none of the  expressions \eqref{multiple product preserving false case}    satisfies \eqref{trace preserving multiple products}.  \qedhere
\end{enumerate}
\end{proof}

Theorem \ref{thm: P_n preserve trace}  and Theorem \ref{thm: multiple product trace preserver} can be generalized to a result involving powers.

\begin{corollary} \label{thm: multiple weighted product trace preserver}
Let an integer $m\ge 2$ and nonzero real numbers $\alpha_1,\ldots,\alpha_m,\beta_1,\ldots,\beta_m$ be given.
Maps $\phi_i: P_n\to \ol{P_n}$ ($i=1,\ldots,m$) satisfy that
\begin{equation}\label{trace preserving multiple weighted products}
\tr \left(\phi_1(A_1)^{\alpha_1}\cdots \phi_m(A_m)^{\alpha_m}\right)=\tr \left(A_1^{\beta_1}\cdots A_m^{\beta_m}\right),\quad\forall A_1,\ldots,A_m\in P_n,
\end{equation}
if and only if they are described as follow:
\begin{enumerate}
\item When $m$ is odd, there exists a unitary matrix $U\in M_n$ and scalars $c_1,\ldots, c_m\in\R_+$ such that $c_1\cdots c_m=1$ and
\begin{equation}\label{trace preserving multiple weighted products odd}
\phi_i(A)=c_i^{1/\alpha_i} U^*A^{\beta_i/\alpha_i} U,\quad \forall A\in P_n,\ i=1,\ldots,m.
\end{equation}

\item When $m$ is even, there exists an invertible $M\in M_n$ and scalars $c_1,\ldots, c_m\in\R_+$ such that $c_1\cdots c_m=1$ and
\begin{equation}\label{trace preserving multiple weighted products even}
\phi_i(A)=
\begin{cases}
c_i^{1/\alpha_i} \left(M^*A^{\beta_i} M\right)^{1/\alpha_i}, &\text{$i$ is odd,}
\\
c_i^{1/\alpha_i} \left(M^{-1}A^{\beta_i} M^{-*}\right)^{1/\alpha_i} , &\text{$i$ is even,}
\end{cases}
\quad \forall A\in P_n.
\end{equation}
\end{enumerate}
\end{corollary}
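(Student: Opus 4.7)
The plan is to deduce Corollary \ref{thm: multiple weighted product trace preserver} from Theorem \ref{thm: P_n preserve trace} (when $m=2$) and Theorem \ref{thm: multiple product trace preserver} (when $m\ge 3$) by a bijective change of variable on $P_n$ combined with taking roots. The converse follows by direct computation from the forms \eqref{trace preserving multiple weighted products odd}--\eqref{trace preserving multiple weighted products even}, so all the real work lies in showing necessity.

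For the substitution step, since $B\mapsto B^{1/\beta_i}$ is a bijection on $P_n$ for every nonzero real $\beta_i$, I define
\[
\psi_i:P_n\to \ol{P_n},\qquad \psi_i(B):=\phi_i(B^{1/\beta_i})^{\alpha_i}.
\]
Setting $A_i=B_i^{1/\beta_i}$ in \eqref{trace preserving multiple weighted products} transforms it into the unweighted identity
\[
\tr\bigl(\psi_1(B_1)\cdots\psi_m(B_m)\bigr)=\tr(B_1\cdots B_m),\qquad \forall B_1,\ldots,B_m\in P_n.
\]
Applying Theorem \ref{thm: P_n preserve trace} or Theorem \ref{thm: multiple product trace preserver} to the family $\{\psi_i\}$ then yields the standard congruence-with-scalar normal forms and, as a byproduct, shows that each $\psi_i(B)\in P_n$.

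To invert the substitution I use $\phi_i(A)=\psi_i(A^{\beta_i})^{1/\alpha_i}$. For odd $m\ge 3$, $\psi_i(B)=c_i U^*BU$ with $U$ unitary; commuting the $(1/\alpha_i)$-th root through unitary conjugation yields
\[
\phi_i(A)=\bigl(c_i U^*A^{\beta_i}U\bigr)^{1/\alpha_i}=c_i^{1/\alpha_i}U^*A^{\beta_i/\alpha_i}U,
\]
which is \eqref{trace preserving multiple weighted products odd}. For even $m\ge 2$, $\psi_i(B)$ is a congruence by an invertible $M$ (or by $M^{-*}$), and since $M$ need not be unitary, the root cannot be passed through the congruence; one therefore obtains the ``power of a congruence'' form \eqref{trace preserving multiple weighted products even} as stated. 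The constraint $c_1\cdots c_m=1$ is inherited directly from the unweighted theorems.

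The main obstacle is well-definedness: for generic real $\alpha_i$ the expression $\phi_i(A)^{\alpha_i}$ \emph{a priori} requires $\phi_i(A)\in P_n$, not merely $\ol{P_n}$. I plan to handle this by a bootstrap — initially $\phi_i(A)^{\alpha_i}$ is interpreted via functional calculus on $\ol{P_n}$ (which is legitimate at least when $\alpha_i>0$), after which the unweighted theorem applied to $\psi_i$ forces $\psi_i(B)\in P_n$, which retroactively forces $\phi_i(A)\in P_n$ and validates all fractional and negative powers used above. A secondary minor issue is the transpose branch of Theorem \ref{thm: P_n preserve trace} appearing only in the $m=2$ case: it produces a transpose analogue of \eqref{trace preserving multiple weighted products even} that must be carried through the reduction in parallel with the listed form.
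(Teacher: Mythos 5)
Your proposal is correct and follows essentially the same route as the paper: the paper's own proof consists precisely of the substitution $\psi_i(A):=\phi_i(A^{1/\beta_i})^{\alpha_i}$ followed by an appeal to Theorem \ref{thm: P_n preserve trace} (for $m=2$) and Theorem \ref{thm: multiple product trace preserver} (for $m\ge 3$). The additional care you take with well-definedness of the fractional powers on $\ol{P_n}$ and with the transpose branch in the $m=2$ case addresses details the paper leaves implicit, and is a welcome refinement rather than a divergence.
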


\begin{proof} It suffices to prove that  \eqref{trace preserving multiple weighted products} implies \eqref{trace preserving multiple weighted products odd} (when $m$ is odd) or \eqref{trace preserving multiple weighted products even} (when $m$ is even).
Define $\psi_i:P_n\to \ol{P_n}$ such that $\psi_i(A):=\phi_i(A^{1/\beta_i})^{\alpha_i}$ for $i=1,\ldots,m$. Then
$$
\tr(\psi_1(A_1)\cdots\psi_m(A_m))=\tr(A_1\cdots A_m),\quad\forall A_1,\ldots,A_m\in P_n.
$$
Apply Theorem \ref{thm: P_n preserve trace} for $m=2$ and Theorem \ref{thm: multiple product trace preserver} for $m\ge 3$ to complete the proof.
\end{proof}

All theorems in this subsection have counterparts in the maps $P_n(\R)\to\ol{P_n(\R)}$.  The real versions of
Theorem \ref{thm: P_n preserve trace} and Theorem \ref{thm: multiple product trace preserver} are as follow.

\begin{theorem} \label{thm: multiple product trace preserver real}
Let $m\ge 2$.
Maps $\phi_i: P_n(\R)\to \ol{P_n(\R)}$ ($i=1,\ldots,m$)  satisfy that
\begin{equation}\label{trace preserving multiple products real}
\tr (\phi_1(A_1)\cdots \phi_m(A_m))=\tr (A_1\cdots A_m)
\end{equation}
for all $ A_1,\ldots,A_m\in P_n(\R),$ if and only if they are described as follow:
\begin{enumerate}
\item When $m$ is odd, there exist an orthogonal matrix $O\in M_n(\R)$ and scalars $c_1,\ldots, c_m\in\R_+$ such that $c_1\cdots c_m=1$ and
\begin{equation}\label{trace preserving multiple products odd real}
\phi_i(A)=c_i O^t AO,\quad \forall A\in P_n(\R),\ i=1,\ldots,m.
\end{equation}

\item When $m$ is even, there exist an invertible $M\in M_n(\R)$ and scalars $c_1,\ldots, c_m\in\R_+$ such that $c_1\cdots c_m=1$ and
\begin{equation}\label{trace preserving multiple products even real}
\phi_i(A)=
\begin{cases}
c_i M^t AM, &\text{$i$ is odd,}
\\
c_i M^{-1}AM^{-t}, &\text{$i$ is even,}
\end{cases}
\quad \forall A\in P_n(\R).
\end{equation}
\end{enumerate}
\end{theorem}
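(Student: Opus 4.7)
The plan is to parallel the complex-case proofs of Theorem \ref{thm: P_n preserve trace} (for $m=2$) and Theorem \ref{thm: multiple product trace preserver} (for $m\ge 3$), exploiting the key simplification that on real symmetric matrices the transpose is the identity, so the two branches of the complex statement collapse into one.

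First I would verify the ``if'' direction, which is routine via cyclic invariance of trace and the identity $(M^tAM)^t = M^tAM$ for $A\in S_n(\R)$. For the converse, I would apply Theorem \ref{thm: two maps preserving trace} over $\R$ to extend each $\phi_i$ to a linear bijection of $\langle P_n(\R)\rangle = S_n(\R)$; the trace form $\tr(AB)$ is nondegenerate on $S_n(\R)$, so hypothesis (3) of that theorem is satisfied.

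For $m=2$, I would mimic the proof of Theorem \ref{thm: P_n preserve trace}. The step that $\phi(I)$ is invertible works identically since every real symmetric matrix has a real orthogonal spectral decomposition with real eigenvalues. Normalizing so that $\phi(I)=I$, the Cauchy–Schwarz chain
\[
1=\tr(\phi(P)\psi(P))\le[\tr\phi(P)^2\,\tr\psi(P)^2]^{1/2}\le \tr\phi(P)\,\tr\psi(P)=\tr\phi(P),
\]
combined with $\sum\tr\phi(P^{(i)})=n$, again forces $\phi(P)=\psi(P)$ to be a rank-one projection for every rank-one real projection $P$. Since such $P$ span $S_n(\R)$, this gives $\phi=\psi$, and then the real analog of \cite[Lemma 3.6]{Huang16} yields $\phi(A)=O^tAO$ for an orthogonal $O$; the ``transpose branch'' of the complex statement is absorbed because $A^t=A$ on $S_n(\R)$. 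The general case then reduces by conjugation with $\phi(I)^{1/2}$ to produce $M=O\,\phi(I)^{1/2}$, giving \eqref{trace preserving multiple products even real} for $m=2$.

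For $m\ge 3$, I would set $C:=\phi_3(I)\cdots\phi_m(I)$ and follow the reduction in Theorem \ref{thm: multiple product trace preserver}. The ``extract the imaginary part'' step of the complex proof is replaced by the following real argument: one first shows as before that $\wt{\phi_2}(B)C\in S_n(\R)$ for all $B\in H_n(\R)=S_n(\R)$, hence $AC$ is symmetric for all $A\in S_n(\R)$; equivalently $CA=(AC)^t=AC$, so $C$ commutes with all symmetric matrices and is therefore a scalar $cI$. Positivity $c\in\R_+$ follows from \eqref{trace preserving multiple products real} with $A_1=\cdots=A_m=I$. This reduces the relation to pairwise identities $\tr(\phi_i(A)\phi_{i+1}(B))=c_{i,i+1}\tr(AB)$, to which the $m=2$ real result applies. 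Splicing the pairwise forms together and, in the odd case, using $c_1M^tAM=c_{m+1}M^{-1}AM^{-t}$ for all $A$ to conclude that $MM^t$ is scalar and hence $M$ can be rescaled to an orthogonal $O$, yields \eqref{trace preserving multiple products odd real} or \eqref{trace preserving multiple products even real}.

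The main subtle point will be the real version of \cite[Lemma 3.6]{Huang16}: classifying unital $\phi:P_n(\R)\to\ol{P_n(\R)}$ with $\tr(\phi(A)\phi(B))=\tr(AB)$. I expect this to be either already available in that reference or to follow by a verbatim translation of the complex proof, since the key inputs (real Wigner-type rigidity for rank-one projections in $S_n(\R)$ and spectral decomposition) all have real analogs. Everything else in the argument is a direct transcription of the complex proofs with $M_n$ replaced by $S_n(\R)$ and Hermitian conjugate replaced by transpose.
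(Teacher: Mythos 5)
Your $m=2$ argument is essentially sound and close in spirit to the paper's: the invertibility of $\phi_1(I)$, the normalization, and the Cauchy--Schwarz rank-one-projection argument giving $\phi_1=\phi_2$ all transfer as you say. The one soft spot is your appeal to a ``real analog of \cite[Lemma 3.6]{Huang16},'' which is not actually available off the shelf; the paper instead finishes this case by deriving $\tr(\phi_1(A)^2)=\tr(\phi_1(A^2))$, invoking Kadison's inequality to get $\phi_1(A)^2=\phi_1(A^2)$, complexifying $\phi_1$ to a square-preserving linear map on $S_n$, applying \cite[Theorem 2]{ChanLim}, and then checking that the resulting orthogonal matrix must be real. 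Your route could be completed, but that missing lemma would have to be proved.

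The genuine gap is in your $m\ge 3$ reduction. You assert that ``one first shows as before that $\wt{\phi_2}(B)C\in S_n(\R)$,'' but the complex argument for the corresponding step has no real counterpart: in the proof of Theorem \ref{thm: multiple product trace preserver} one deduces that $\tr(B\,\phi_2(A_2)C)\in\R$ for every Hermitian $B$ and then writes $\phi_2(A_2)C=A'+\i A''$ to kill the skew part $A''$. Over $\R$ the condition $\tr(B\,\phi_2(A_2)C)\in\R$ is vacuous, and pairing against symmetric matrices only detects the symmetric part of $\phi_2(A_2)C$, so nothing in the trace identity forces its antisymmetric part to vanish. Hence your conclusion that $C=\phi_3(I)\cdots\phi_m(I)$ is scalar --- and with it the entire reduction to pairwise identities --- is unsupported. (There is also a small slip: $(AC)^t=C^tA$, not $CA$, though that is repairable once symmetry of $AC$ is actually known.) The paper sidesteps this difficulty completely: it extends each $\phi_i$ affinely to $S_n(\R)$ via $\phi_i(A):=\phi_i(A+cI_n)-c\phi_i(I_n)$ and, for $m\ge 3$, simply invokes Theorem \ref{thm: S_n(R) trace preserve m product}, which is itself proved by complexifying to $S_n$ and applying Theorem \ref{thm: S_n trace preserve m product}; the scalar-matrix steps there rest on the skew-symmetric complement $K_n$ and on the fact that every matrix is a product of two symmetric matrices, arguments that do survive over $\R$. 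To repair your plan you would need either a genuinely real proof that $\phi_3(I)\cdots\phi_m(I)$ is scalar, or to reroute the $m\ge 3$ case through the symmetric-matrix theorem as the paper does.
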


\begin{proof}
We extend each $\phi_i$ to a map $S_n(\R)\to S_n(\R)$ and still denote it $\phi_i$ by abuse of terminologies.
For  each $A\in S_n(\R)$, choose arbitrary $c\in\R$ such that $A+cI_n\in P_n(\R)$.  Define
\be{equation}{
\phi_i(A)=\phi_i(A+cI_n)-c\phi_i(I_n).
}
The equality \eqref{trace preserving multiple products real} guarantees that each $\phi_i: S_n(\R)\to S_n(\R)$ is well-defined,
and they satisfy \eqref{trace preserving multiple products real} for all $ A_1,\ldots,A_m\in S_n(\R).$

When $m\ge 3$, the statement can be proved by Theorem \ref{thm: S_n(R) trace preserve m product} in the next section.

When $m=2$, we have $\tr(\phi_1(A_1)\phi_2(A_2))=\tr(A_1A_2)$ for all $A_1, A_2\in P_n(\R)$.
Similar to the proof of Theorem \ref{thm: P_n preserve trace} (2), we see that $\phi_1 (I)$ is invertible.
  Replacing
$(\phi_1(A_1), \phi_2(A_2))$ by $(\phi_1(I)^{-1/2}\phi_1(A_1)\phi_1(I)^{-1/2}, \phi_1(I)^{1/2}\phi_2(A_2)\phi_1(I)^{1/2})$ if necessary, we may assume that
$\phi_1(I)=I.$ Similar to the proof of Theorem \ref{thm: P_n preserve trace} (1), we  get $\phi_1=\phi_2$ and $\tr (\phi_1(A))=\tr (A)$ for $A\in P_n(\R).$
Therefore,
\be{equation}{\label{P_n real m=2}
\tr(\phi_1(A)^2)=\tr(\phi_1(A)\phi_2(A))=\tr(A^2)=\tr(\phi_1(A^2)),\quad \forall A\in P_n(\R).
}
By Kadison’s inequality \cite[p. 495]{Kadison52}, $\phi_1(A)^2\le \phi_1(A^2)$. The equality \eqref{P_n real m=2} forces $\phi_1(A)^2= \phi_1(A^2)$ for all $A\in P_n(\R)$ and thus for all $A\in S_n(\R)$. Now extend $\phi_1$ to a linear map $\wt{\phi_1}: S_n\to S_n$ such that for all $A, B\in S_n(\R)$:
\be{equation}{
\wt{\phi_1}(A+\i B)=\phi_1(A)+\i \phi_1(B).
}
Then
\be{eqnarray}{
\wt{\phi_1}((A+\i B)^2) &=& \phi_1(A^2-B^2)+\i\phi_1((A+B)^2-A^2-B^2)
\\
&=& \phi_1(A)^2-\phi_1(B)^2+\i \left(\phi_1(A+B)^2-\phi_1(A)^2-\phi_1(B)^2\right)
\\
&=& \phi_1(A)^2-\phi_1(B)^2+\i \left(\phi_1(A)\phi_1(B)+\phi_1(B)\phi_1(A)\right)
\\
&=& \wt{\phi_1}(A+\i B)^2.
}
By \cite[Theorem 2]{ChanLim}, there is an orthogonal matrix $O=[o_{ij}]\in M_n$ such that   $\phi_1(A)=\phi_2(A)=O^t AO$ for
$A\in S_n(\R)$. Note that every $B\in M_n(\R)$ is a product of two real symmetric matrices, say $B=A_1A_2$ for $A_1, A_2\in S_n(\R)$,
so that
$$O^t BO=O^t A_1O O^t A_2O=\phi_1(A_1)\phi_2(A_2)\in M_n(\R).$$
The $(p,q)$ entry of $O^t E_{ij}O$
is $o_{ip}o_{jq}\in\R$  for all   $i,j,p,q\in\{1,\ldots,n\}.$ Hence the entries of $O$ are either all real or all pure imaginary.
Since $O$ is orthogonal, it must be real orthogonal.
The statement for $m=2$ has been proved.
\end{proof}

\section{Maps on $S_n$}

By Corollary \ref{thm: trace preserver on special spaces}, every linear bijection $\phi: S_n\to S_n$
corresponds to another linear bijection $\psi: S_n\to S_n$ such that
 $\tr(\phi(A)\psi(B))=\tr(AB)$ for all $A, B\in S_n$.
 Some examples can be analogously developed from Example \ref{Ex: Hadamard}.

  We now study the problem of three or more maps $S_n\to S_n$ preserving trace of products.

\begin{theorem}\label{thm: S_n trace preserve m product}
Suppose $m\ge 3$. Maps $\phi_1,\ldots,\phi_m: S_n\to S_n$ satisfy that
\begin{equation}\label{eq S_n trace preserve m product}
\tr(\phi_1(A_1)\cdots\phi_m(A_m))=\tr(A_1\cdots A_m),\quad \forall A_1,\ldots,A_m\in S_n,
\end{equation}
if and only if they are described as follow:
\begin{enumerate}
\item When $m$ is odd, there exist an orthogonal matrix $O\in M_n$ and nonzero complex scalars $c_1,\ldots, c_m$ such that $c_1\cdots c_m=1$ and
\begin{equation}\label{S_n trace preserving multiple products odd}
\phi_i(A)=c_i O^tAO,\quad \forall A\in S_n,\ i=1,\ldots,m.
\end{equation}

\item When $m$ is even, there exist an invertible $M\in M_n$ and nonzero complex scalars $c_1,\ldots, c_m$ such that $c_1\cdots c_m=1$ and
\begin{equation}\label{S_n trace preserving multiple products even}
\phi_i(A)=
\begin{cases}
c_i M^tAM, &\text{$i$ is odd,}
\\
c_i M^{-1}AM^{-t}, &\text{$i$ is even,}
\end{cases}
\quad \forall A\in S_n.
\end{equation}
\end{enumerate}
\end{theorem}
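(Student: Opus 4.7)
My plan is to mirror the three-phase strategy of Theorem \ref{thm: H_n trace preserve m product}: first promote each $\phi_i$ to a linear bijection of an ambient space, then invoke Theorem \ref{thm: M_n trace preserve m product} to obtain $\widetilde\phi_i(A) = N_i A N_{i+1}^{-1}$, and finally impose $\phi_i(S_n)\subseteq S_n$ to pin down the $N_i$. The notable new difficulty is that $\langle S_n\rangle_\C = S_n$ (not $M_n$ as for $H_n$), so the reduction step cannot use a $\C$-linear extension and needs a different route.

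For the linearity phase, set $\psi_1(B) := \phi_2(B)\phi_3(I)\cdots\phi_m(I) \in M_n$ for $B \in S_n$; then \eqref{eq S_n trace preserve m product} gives $\tr(\phi_1(A)\psi_1(B)) = \tr(AB)$ for $A,B\in S_n$. Since $\phi_1(A)\in S_n$ and $\tr(SY) = \tr(S(Y+Y^t)/2)$ for $S\in S_n$, $Y\in M_n$, I can replace $\psi_1$ by its symmetrization $\psi_1^s(B) := (\psi_1(B)+\psi_1(B)^t)/2 \in S_n$. Because the bilinear form $\tr(AB)$ is non-degenerate on $S_n$, Corollary \ref{thm: trace preserver on special spaces} extends $\phi_1$ (and $\psi_1^s$) to a linear bijection of $S_n$. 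Cycling which $\phi_i$ plays the role of $\phi_1$, every $\phi_i$ becomes a linear bijection on $S_n$, and \eqref{eq S_n trace preserve m product} extends multilinearly to all of $S_n^{\times m}$.

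For the extension to $M_n$, the plan is to adapt the $M_n$ argument inside $S_n$ using the Jordan product. For $m = 3$, symmetrizing the trace identity yields the Jordan-algebra analog
\[
\phi_2(B)\circ \phi_3(C) \;=\; \eta(B\circ C),\qquad B,C\in S_n,
\]
where $X\circ Y := (XY+YX)/2$ and $\eta := (\phi_1^t)^{-1}$, with $\phi_1^t$ the transpose of $\phi_1$ relative to the trace pairing on $S_n$. This parallels $\phi_2(B)\phi_3(C) = \phi_2(BC)\phi_3(I)$ in the $M_n$ proof. Setting $B = I$ or $C = I$ and exploiting bijectivity, I can show $\phi_2(I)$ is invertible; conjugating by $\phi_2(I)^{1/2}$ normalizes $\phi_2(I) = I$ (and similarly $\phi_3(I) = I$), after which $\phi_2 = \phi_3$ becomes a Jordan automorphism of $(S_n,\circ)$. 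The classical classification gives $\phi_2(A) = O^t A O$ for an orthogonal $O$, and this formula extends verbatim to all of $M_n$. For $m\ge 4$, the analogous identity carries a tail factor $R := \phi_4(I)\cdots\phi_m(I)$, which I would either peel off inductively by setting one $A_i$ at a time to $I$ or absorb by reducing to the case $m = 3$.

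Once the extensions $\widetilde\phi_i: M_n \to M_n$ satisfying the $m$-fold trace identity are constructed, Theorem \ref{thm: M_n trace preserve m product} gives $\widetilde\phi_i(A) = N_i A N_{i+1}^{-1}$ with $N_{m+1} = N_1$, and the constraint $\phi_i(S_n)\subseteq S_n$ forces $N_{i+1}^{-1} = c_i' N_i^t$ for some $c_i'\in\C^*$. Setting $M := N_1^t$ yields $\phi_1(A) = c_1 M^t A M$, $\phi_2(A) = c_2 M^{-1}AM^{-t}$, and so on. For odd $m$ the cyclic condition $N_{m+1}=N_1$ forces $MM^t = cI$, so $M$ rescales to an orthogonal $O$, giving \eqref{S_n trace preserving multiple products odd}; for even $m$ no further constraint appears and \eqref{S_n trace preserving multiple products even} follows, with $c_1\cdots c_m = 1$ obtained by specializing all $A_i = I$ in \eqref{eq S_n trace preserve m product}. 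The main obstacle I expect is precisely the middle phase: the slick complex-span shortcut used for $H_n$ is unavailable here, and making the Jordan-algebra argument rigorous for general $m \ge 3$ requires careful bookkeeping of the scalar normalizations and the tail factor $R$.
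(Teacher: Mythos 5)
Your first phase (each $\phi_i$ extends to a linear bijection of $S_n$ via Theorem \ref{thm: two maps preserving trace}) is correct and matches the paper. But the rest of the plan has two genuine problems. First, the final phase is circular: you propose to build extensions $\widetilde\phi_i:M_n\to M_n$ satisfying the $m$-fold trace identity and then invoke Theorem \ref{thm: M_n trace preserve m product}, but since $\langle S_n\rangle=S_n\neq M_n$ --- the very obstruction you identify at the outset --- such extensions cannot be constructed until you already know the explicit form of $\phi_i$ on $S_n$; and once you know that form you are done and Theorem \ref{thm: M_n trace preserve m product} buys you nothing. The paper never routes the $S_n$ case through Theorem \ref{thm: M_n trace preserve m product}; it works entirely inside $S_n$.

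Second, the middle phase, which is where all the content lives, is left as a sketch whose stated steps do not go through. (i) Your symmetrized identity $\phi_2(B)\circ\phi_3(C)=\eta(B\circ C)$ is valid but discards the skew-symmetric part of $\phi_2(B)\phi_3(C)$, and that part is exactly what the paper uses for $m=3$: antisymmetrizing gives $\phi_2(B)\phi_3(C)-\phi_3(B)\phi_2(C)\in K_n$, from which $\phi_3(C)=\tfrac12(J\phi_2(C)+\phi_2(C)J)$ with $J=\phi_3(\phi_2^{-1}(I))$, and then the fact that products of two symmetric matrices sweep all of $M_n$ forces $J=cI$, i.e.\ $\phi_3=c\phi_2$. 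Deriving ``$\phi_2=\phi_3$ up to scalar'' is the crux, and your plan does not address it. (ii) The normalization ``conjugate by $\phi_2(I)^{1/2}$'' does not preserve the Jordan identity: $X\mapsto Q^{-1}XQ^{-t}$ is a Jordan homomorphism of $S_n$ only when $Q$ is orthogonal, so after normalizing you do not obtain a Jordan automorphism to classify. (iii) Even after reducing to a single map $\phi$, you still must show $\phi(I)$ is scalar (the paper does this with idempotents, showing $\phi(A)^2$ commutes with $\phi(I)$ for a basis of idempotents $A$) and then that $\phi(B^2)=\phi(B)^2$ before citing Chan--Lim. (iv) For $m\ge4$ your ``peel off the tail factor'' is not needed and hides the actual simplification: there $\phi_3(A_3)\cdots\phi_m(A_m)$ already ranges over all of $M_n$, so one gets the exact identity $\phi_1(A)\phi_2(I)=\phi_1(I)\phi_2(A)$ rather than an identity modulo $K_n$, and the argument is genuinely easier than $m=3$, not a corollary of it.
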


\begin{proof} It suffices to prove that  \eqref{eq S_n trace preserve m product} implies \eqref{S_n trace preserving multiple products odd} (when $m$ is odd) or \eqref{S_n trace preserving multiple products even} (when $m$ is even).
Let $C_1:=\phi_3(I)\cdots\phi_m(I)$, and $\psi_1:S_n\to S_n C_1$ such that $\psi_1(B):=\phi_2(B)C_1$ for $B\in S_n$. Then
$\tr(\phi_1(A)\psi_1(B))=\tr(AB)$ for all $A, B\in S_n.$
Theorem \ref{thm: two maps preserving trace}  implies that $\phi_1$ and $\psi_1$ are linear bijections. Similarly, each $\phi_i$'s for $i=1,\ldots,m$ is a linear bijection.
If $C_1$ is singular, then  $v^tC_1=0$ for some $v\in \C^n\setminus\{0\}$, so that $vv^t\in S_n\setminus\{0\}$ but
$$
\psi_1(\phi_2^{-1}(vv^t))=vv^tC_1=0.
$$
This contradicts the bijectivity of $\psi_1$. Hence $C_1$ is invertible. So each of $\phi_3(I),\ldots,\phi_m(I)$, as well as $\phi_1(I)$ and $\phi_2(I)$,
is invertible.

First we prove the theorem for the case $m=3$.

For all $A, B, C\in S_n$ we have
\begin{eqnarray*}
&&\tr\left[\phi_1(A)\left(\phi_2(B)\phi_3(C)-\phi_3(B)\phi_2(C)\right)\right]
\\
&=& \tr (\phi_1(A)\phi_2(B)\phi_3(C)) -\tr (\phi_2(C)\phi_3(B)\phi_1(A))^t
\\
&=& \tr (ABC)-\tr(ACB) =\tr (ABC)-\tr(ABC)^t=0.
\end{eqnarray*}
Since $\phi_1:S_n\to S_n$ is bijective, we get
\begin{equation}\label{L_BC}
\phi_2(B)\phi_3(C)-\phi_3(B)\phi_2(C)\in K_n.
\end{equation}
Set $B=\phi_2^{-1}(I)$ and denote $J:=\phi_3\circ\phi_2^{-1}(I)\in S_n$ in \eqref{L_BC}. Then
\be{equation}
{
\phi_3(C)-J\phi_2(C)\in K_n,\qquad\forall C\in S_n.
}
Therefore,
\be{equation}
{\label{phi_3 by phi_2}
\phi_3(C)=\frac{1}{2}(J\phi_2(C)+\phi_2(C)J),\qquad\forall C\in S_n.
}
Apply the expression of $\phi_3$ in \eqref{phi_3 by phi_2} to \eqref{L_BC}:
\be{eqnarray}
{
&&\frac{1}{2}\phi_2(B)(J\phi_2(C)+\phi_2(C)J)-\frac{1}{2}(J\phi_2(B)+\phi_2(B)J)\phi_2(C)
\notag\\
&=& \frac{1}{2}\phi_2(B)\phi_2(C)J-\frac{1}{2}J\phi_2(B)\phi_2(C)\in K_n.
}
By surjectivity of $\phi_2$, the product $\frac{\phi_2(B)}{2}\phi_2(C)$ of two symmetric matrices  can be any matrix in $M_n$ (cf. \cite[Theorem 3.2.3.2]{Horn}).
So $JA-AJ\in K_n$ for all $A\in M_n$, which is equivalent to
\be{equation}
{\label{JK_n}
J(A-A^t)\in K_n,\qquad\forall A\in M_n.
}
The matrix $A-A^t$ in \eqref{JK_n} goes through all matrices in $K_n$. So \eqref{JK_n} holds if and only if $J=cI$ for some $c\in\C\setminus\{0\}$.
Then \eqref{phi_3 by phi_2} implies that
\be{equation}
{
\phi_3(C)=c\phi_2(C),\qquad\forall C\in S_n.
}
Similarly, $\phi_1$ is a nonzero constant multiple of $\phi_2$. So there exists a linear bijection $\phi:S_n\to S_n$ such that
$\phi_i=c_i'\phi$ for $i=1,2,3$, $c_1'c_2'c_3'=1$, and
\be{equation}
{
\tr(\phi(A)\phi(B)\phi(C))=\tr(ABC),\qquad\forall A, B, C\in S_n.
}

If $A\in S_n$ satisfies $A^2=A$, then $\tr(\phi(A)^2\phi(B))=\tr(\phi(A)\phi(I)\phi(B))$ for all $B\in S_n$. So
\be{eqnarray}
{
\phi(A)^2-\phi(A)\phi(I) &\in&  K_n,\quad\text{which implies that}
\\
2\phi(A)^2 &=& \phi(A)\phi(I)+\phi(I)\phi(A),
\\
\phi(A)^2\phi(I)+\phi(A)\phi(I)\phi(A) &=& 2\phi(A)^3=\phi(A)\phi(I)\phi(A)+\phi(I)\phi(A)^2,
\\
\phi(A)^2\phi(I) &=& \phi(I)\phi(A)^2.
\label{phi(A)^2 I}
}

Fix a basis $\{A_i: i=1,\ldots, \binom{n+1}{2},\ A_i^2=A_i\}$ of $S_n$. We claim that $\{\phi(A_i)^2: i=1,\ldots, \binom{n+1}{2}\}$ is also a basis of $S_n$.
Suppose $a_1,\ldots, a_{\binom{n+1}{2}}\in\C$ and
\be{equation}
{
a_1\phi(A_1)^2+\cdots+a_{\binom{n+1}{2}}\phi(A_{\binom{n+1}{2}})^2=0.
}
Then for all $C\in S_n$:
\be{eqnarray}
{
0 &=& \tr \left(a_1\phi(A_1)^2+\cdots+a_{\binom{n+1}{2}}\phi(A_{\binom{n+1}{2}})^2\right) \phi(C)
\notag\\ &=& \tr\left(a_1 A_1^2+\cdots+a_{\binom{n+1}{2}}A_{\binom{n+1}{2}}^2\right) C
\notag\\ &=& \tr\left(a_1 A_1+\cdots+a_{\binom{n+1}{2}}A_{\binom{n+1}{2}}\right) C.
}
Therefore, $a_1 A_1+\cdots+a_{\binom{n+1}{2}}A_{\binom{n+1}{2}}=0$, so that $a_1=\cdots=a_{\binom{n+1}{2}}=0$.

By \eqref{phi(A)^2 I},
\be{equation}
{
\phi(A_i)^2\phi(I) = \phi(I)\phi(A_i)^2,\qquad i=1,\ldots, \binom{n+1}{2}.
}
Hence $\phi(I)$ commutes with all matrices in $S_n$.
Therefore, $\phi(I)=c_0I$ for some $c_0\in\C.$ By
\be{equation}
{
c_0^3n=\tr(\phi(I)^3)=\tr(I^3)=n
}
we get $c_0^3=1$.

Denote the linear bijection $\wt{\phi}=\frac{1}{c_0}\phi$. Then $\wt{\phi}(I)=I$ and
\be{equation}
{
\tr(\wt{\phi}(A)\wt{\phi}(B)\wt{\phi}(C))=\tr(ABC),\qquad\forall A, B, C\in S_n.
}
Given $B\in S_n$, for all  $A\in S_n$,
\be{equation}
{
\tr(\wt{\phi}(A)\wt{\phi}(B^2))=\tr(\wt{\phi}(A)\wt{\phi}(B^2)\wt{\phi}(I))=\tr(AB^2)=\tr(\wt{\phi}(A)\wt{\phi}(B)^2).
}
So the symmetric matrices $\wt{\phi}(B^2)=\wt{\phi}(B)^2$. By \cite[Theorem 2]{ChanLim}, there exists an orthogonal matrix $O\in M_n$ such that $\wt{\phi}(A)=O^tAO$ for $A\in S_n$. Let $c_i=c_0c_i'$ for $i=1,2,3$.
Then we get \eqref{S_n trace preserving multiple products odd} for $m=3$.

From now on, we assume that $m\ge 4$.

The matrix $\phi_3(A_3)\cdots\phi_m(A_m)$ for $A_3,\ldots, A_m\in S_n$ can be any matrix in $M_n$ (cf. \cite[Theorem 3.2.3.2]{Horn}).
For all $A, A_3,\ldots, A_m\in S_n$ we have
\be{equation}
{
\tr\left[\left(\phi_1(A)\phi_2(I)-\phi_1(I)\phi_2(A)\right)\phi_3(A_3)\cdots\phi_m(A_m)\right]=0.
}
So $\phi_1(A)\phi_2(I) = \phi_1(I)\phi_2(A)$ for all $A\in S_n$. Hence
\be{equation}
{\label{tr multiple product phi_2 phi_1}
\phi_2(A) = \phi_1(I)^{-1}\phi_1(A)\phi_2(I).
}
Since $\phi_1(A)$ and $\phi_2(A)$ are symmetric and $\phi_1$ is surjective, there exists $c_2'\in\C\setminus\{0\}$ such that
\be{eqnarray}
{
\phi_2(I) &=& c_2' \phi_1(I)^{-t}= c_2' \phi_1(I)^{-1},\quad\text{so that by \eqref{tr multiple product phi_2 phi_1},}
\\
\label{tr multiple product phi_2}
\phi_2(A) &=& c_2' \phi_1(I)^{-1}\phi_1(A)\phi_1(I)^{-1},\qquad\forall A\in S_n.
}
Similarly, there is $c_3'\in\C\setminus\{0\}$ such that
\be{equation}
{
\phi_3(A) = c_3' \phi_2(I)^{-1}\phi_2(A)\phi_2(I)^{-1}=\frac{c_3'}{c_2'}\phi_1(A),\qquad\forall A\in S_n.
}
Therefore, the maps $\phi_1, \phi_3, \phi_5, \ldots$ (resp. $\phi_2, \phi_4, \phi_6, \ldots$) differ only by nonzero scalar multiples.
\be{enumerate}
{
\item If $m$ is odd, then $\phi_1=a_m\phi_m$  for some $a_m \in\C\setminus\{0\}$ and $\phi_m=a_2\phi_2$ for some $a_2 \in\C\setminus\{0\}$.
Therefore,   $\phi_1, \phi_2, \phi_3,\ldots,\phi_m$ differ only by nonzero scalar multiples.
 \eqref{tr multiple product phi_2 phi_1} implies that $\phi_1 (A)=\phi_1 (I)^{-1}\phi_1 (A)\phi_1 (I)$ for all $A\in S_n$, so that
$\phi_1 (I)=c_1 I$ for some $c_1\in\C\setminus\{0\}$. Let $\phi(A):=\frac{1}{c_1}\phi_1(A)$. Then $\phi(I)=I$ and $\phi_1= c_1\phi$. There exist  $c_2,\ldots,c_m\in \C\setminus\{0\}$ such that
$\phi_i=c_i\phi$ for $i=2,\ldots,m$. We have
$$c_1c_2\cdots c_m\tr(I^m)=c_1c_2\cdots c_m\tr(\phi(I)^m)=\tr(\phi_1(I)\cdots\phi_m(I))=\tr(I^m).$$
Therefore, $c_1\cdots c_m=1$, and
\be{equation}
{\label{tr multiple product one map 1}
\tr(\phi(A_1)\cdots\phi(A_m))=\tr(A_1\cdots A_m),\qquad\forall A_1,\ldots, A_m\in S_n.
}

\item If $m$ is even,
write $\phi_1(I)=M^tM$ for an invertible $M\in M_n$, then \eqref{tr multiple product phi_2} implies that
\be{equation}
{
M\phi_2(A)M^t=c_2' M^{-t}\phi_1(A)M^{-1},\qquad\forall A\in S_n.
}
There exist  a map $\phi(A)=  M^{-t}\phi_1(A)M^{-1}$ and scalars $c_1=1,c_2,\ldots,c_m\in\C\setminus\{0\}$   such that
\be{enumerate}
{
\item If $i\in\{1,\ldots,m\}$ is odd, then  $M^{-t}\phi_i(A)M^{-1}=c_i\phi(A)$ for $A\in S_n$.

\item If $i\in\{1,\ldots,m\}$ is even, then  $M\phi_i(A)M^t=c_i\phi(A)$ for $A\in S_n$.

\item $c_1\cdots c_m=1$, $\phi(I)=I$, and \eqref{tr multiple product one map 1} holds:
\be{equation*}
{
\tr(\phi(A_1)\cdots\phi(A_m))=\tr(A_1\cdots A_m),\qquad\forall A_1,\ldots, A_m\in S_n.
}
}
}

In both $m$ even and $m$ odd cases, we successfully find a linear bijection $\phi: S_n\to S_n$ that satisfies
$\phi(I)=I$ and
\eqref{tr multiple product one map 1}.
For all $A, A_3,\ldots, A_m\in S_n$ and $k\in\Z_+$ we have
\be{equation}
{
\tr\left[\left(\phi(A)\phi(A^{k-1})-\phi(I)\phi(A^{k})\right)\phi(A_3)\cdots\phi(A_m)\right]=0.
}
Hence $\phi(A)\phi(A^{k-1})=\phi(I)\phi(A^{k})=\phi(A^{k})$ and $\phi(A)^k=\phi(A^k)$ for $k\in\Z_+$.
Therefore, the following identities hold for every $A, B\in S_n$:
\be{equation}
{
\tr(\phi(A)\phi(B)^k)=\tr(\phi(A)\phi(B^k)\phi(I)\cdots\phi(I))=\tr(AB^k),\quad  \forall k\in\Z_+.
}
By the proof of $(3) \Rightarrow (1)$ in  \cite[Lemma 4.2]{Huang16}  or by   \cite[Theorem 2]{ChanLim},
there exists an orthogonal matrix $O\in M_n$ such that $\phi(A)=O^tAO$ for $A\in S_n$.
When $m$ is odd, we get  \eqref{S_n trace preserving multiple products odd}.
When $m$ is even, we get \eqref{S_n trace preserving multiple products even}.
\end{proof}

The real version of Theorem \ref{thm: S_n trace preserve m product}  holds. We state and prove it below.

\begin{theorem}\label{thm: S_n(R) trace preserve m product}
Suppose $m\ge 3$. Maps $\phi_1,\ldots,\phi_m: S_n(\R)\to S_n(\R)$ satisfy that
\begin{equation}\label{eq S_n(R) trace preserve m product}
\tr(\phi_1(A_1)\cdots\phi_m(A_m))=\tr(A_1\cdots A_m),\quad \forall A_1,\ldots,A_m\in S_n(\R),
\end{equation}
if and only if the following are true:
\begin{enumerate}
\item When $m$ is odd, there exist a real orthogonal matrix $O\in M_n(\R)$ and nonzero real scalars $c_1,\ldots, c_m$ such that $c_1\cdots c_m=1$ and
\begin{equation}\label{S_n(R) trace preserving multiple products odd}
\phi_i(A)=c_i O^tAO,\quad \forall A\in S_n(\R),\ i=1,\ldots,m.
\end{equation}

\item When $m$ is even, there exist an invertible $M\in M_n(\R)$ and nonzero real scalars $c_1,\ldots, c_m$ such that $c_1\cdots c_m=1$ and
\begin{equation}\label{S_n(R) trace preserving multiple products even}
\phi_i(A)=
\begin{cases}
c_i M^tAM, &\text{$i$ is odd,}
\\
c_i M^{-1}AM^{-t}, &\text{$i$ is even,}
\end{cases}
\quad \forall A\in S_n(\R).
\end{equation}
\end{enumerate}
\end{theorem}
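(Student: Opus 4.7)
The plan is to complexify the problem and reduce it to Theorem \ref{thm: S_n trace preserve m product}. The first step is to show that each $\phi_i$ extends to an $\R$-linear bijection of $S_n(\R)$. Following the opening of the proof of Theorem \ref{thm: S_n trace preserve m product}, I will set $C_1 := \phi_3(I) \cdots \phi_m(I)$ and $\psi_1(B) := \phi_2(B) C_1$, so that $\tr(\phi_1(A) \psi_1(B)) = \tr(AB)$ for all $A, B \in S_n(\R)$. Because the trace pairing on $S_n(\R)$ is nondegenerate, Theorem \ref{thm: two maps preserving trace} (applied with $\F = \R$) extends $\phi_1$ and $\psi_1$ to $\R$-linear bijections of $S_n(\R)$, and a cyclic permutation of the argument does the same for every $\phi_i$ while simultaneously showing that each partial product $\phi_{i+2}(I) \cdots \phi_{i-1}(I)$ is invertible.

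Next, I will complexify each extended $\phi_i$ to a $\C$-linear bijection $\widehat{\phi}_i : S_n \to S_n$ via $\widehat{\phi}_i(A + \i B) := \phi_i(A) + \i \phi_i(B)$ for $A, B \in S_n(\R)$. Both sides of \eqref{eq S_n(R) trace preserve m product}, viewed as functions of $\widehat{A}_1, \ldots, \widehat{A}_m \in S_n$, are $\C$-multilinear and agree on $S_n(\R)^m$, hence agree on all of $S_n^m$. Theorem \ref{thm: S_n trace preserve m product} then applies to the $\widehat{\phi}_i$'s and supplies nonzero scalars $c_1, \ldots, c_m \in \C$ with $c_1 \cdots c_m = 1$ together with a complex orthogonal $O \in M_n$ (when $m$ is odd) or an invertible $M \in M_n$ (when $m$ is even) realizing the stated formulas.

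What remains is a real-structure argument on $O$ (resp.\ $M$) and on the $c_i$, using the constraint that each $\phi_i$ sends $S_n(\R)$ into $S_n(\R)$. After normalizing $c_1 = 1$ and invoking the fact, already used in the proof of Theorem \ref{thm: multiple product trace preserver real}, that every $B \in M_n(\R)$ is a product of two real symmetric matrices, I will conclude that $O^t B O$ (resp.\ $M^t B M$) lies in $M_n(\R)$ for every $B \in M_n(\R)$. Taking $B = E_{ij}$ then shows that each entry product $o_{ip} o_{jq}$ (resp.\ $m_{ip} m_{jq}$) is real, and the row-and-column compatibility analysis at the end of the proof of Theorem \ref{thm: multiple product trace preserver real} forces $O$ (resp.\ $M$) to be real after possibly multiplying by $\i$, which can be reabsorbed into $c_1$. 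Once $O$ or $M$ is real, the relation $\phi_i(I) \in S_n(\R)$ immediately gives $c_i \in \R$, and the product constraint $c_1 \cdots c_m = 1$ is inherited.

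I expect the main obstacle to be the real-structure step for $M$ in the even case. In the odd case the matrix $O$ is complex orthogonal, so the entrywise reality of all $o_{ip} o_{jq}$ combined with $O^t O = I$ makes the conclusion quite rigid. In the even case $M$ is only required to be invertible, so I will need to combine the entrywise reality of $m_{ip} m_{jq}$ with the symmetric-output constraints on both $\phi_1(A) = M^t A M$ and $\phi_2(A) = c_2 M^{-1} A M^{-t}$ in order to pin down $M$ up to an overall complex scalar and to conclude that each $c_i$ is real.
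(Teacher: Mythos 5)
Your proposal follows essentially the same route as the paper: complexify each $\phi_i$ to a $\C$-linear map on $S_n$, observe that the trace identity extends by multilinearity, invoke Theorem \ref{thm: S_n trace preserve m product}, and then argue that $O$ (resp.\ $M$) and the scalars $c_i$ must be real. The only caveat is in your even-case reality step: the product trick gives $M^tBM^{-t}\in M_n(\R)$ rather than $M^tBM\in M_n(\R)$ for $B\in M_n(\R)$, so you should instead feed the symmetric inputs $E_{ii}$ and $E_{ij}+E_{ji}$ into $\phi_1(A)=M^tAM$ (as the paper does), which yields $m_{ip}m_{iq}\in\R$ and $m_{ip}m_{jq}+m_{jp}m_{iq}\in\R$ and already forces each row, and then all of $M$, to be real up to an overall factor of $\i$ absorbable into the $c_i$.
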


\begin{proof} For every $A\in S_n$, denote $\Re A, \Ima A\in S_n(\R)$ such that $A=\Re A+\i \Ima A.$
For each $i=1,\ldots,m$, define $\wt{\phi_i}:S_n\to S_n$ in the way that
\be{equation}{
\wt{\phi_i}(A)=\phi_i(\Re A)+\i\phi_i(\Ima A),\quad\forall A\in S_n.
}
Then
\be{equation}{
\tr(\wt{\phi_1}(A_1)\cdots\wt{\phi_m}(A_m))=\tr(A_1\cdots A_m),\quad \forall A_1,\ldots,A_m\in S_n.
}
By Theorem \ref{thm: S_n trace preserve m product}:
\begin{enumerate}
\item When $m$ is odd, there exist an orthogonal matrix $O\in M_n$ and   $c_1,\ldots, c_m\in\C$ such that $c_1\cdots c_m=1$ and
\begin{equation}
\wt{\phi_i} (A)=c_i O^tAO,\quad \forall A\in S_n,\ i=1,\ldots,m.
\end{equation}
Then $\wt{\phi_i}(I)=c_i I\in S_n(\R)$ implies that each $c_i\in\R$. Write $O=[o_{ij}]_{n\times n}$. The $(p,q)$ entry of
 $\wt{\phi_1}(E_{ij})$ is $c_1 o_{ip}o_{jq}$, which must be real for all   $i,j,p,q\in\{1,\ldots,n\}.$ Hence the entries of $O$ are either all real or all pure imaginary.
Since $O$ is orthogonal, it has to be real orthogonal.

\item When $m$ is even, there exist an invertible $M\in M_n$ and  $c_1,\ldots, c_m\in\C$ such that $c_1\cdots c_m=1$ and
\begin{equation}
\wt{\phi_i} (A)=
\begin{cases}
c_i M^tAM, &\text{$i$ is odd,}
\\
c_i M^{-1}AM^{-t}, &\text{$i$ is even,}
\end{cases}
\quad \forall A\in S_n.
\end{equation}
We may assume that $c_1\in\R$ by adjusting $M$ with a scalar factor if necessary. Then $\wt{\phi_1} (I)\wt{\phi_2} (I)=c_1 c_2 I$ is real, which implies $c_2\in\R$. Similarly,  $c_i\in\R$ for all $i=1,\ldots,n.$ Using the fact that $\wt{\phi_1}(E_{jj})$ is real for all $j=1,\ldots,n$,
we see that the entries of $M$ are either all real or all pure imaginary.
We can assure that $M\in M_n(\R)$ by adjusting the signs of all $c_i$'s if necessary.

\end{enumerate}

Overall, the theorem is proved.
\end{proof}

\section{Maps on $D_n$}

The results about maps $D_n\to D_n$ in this section can be applied to maps between the spaces of upper (resp. lower)  triangular matrices with respective properties.
We view
\be{equation}
{\diag: \C^n\to D_n
}
 as the linear bijection  that sends $(c_1,\cdots,c_n)^t$ to the diagonal matrix $\diag(c_1,\ldots,c_n)$, and
\be{equation}
{\diag^{-1}:D_n\to \C^n
}
 the inverse map of $\diag$.

Let $\phi_1, \phi_2: D_n\to D_n$ satisfy that
\be{equation}
{\label{D_n trace preserver 2 product}
\tr(\phi_1(A_1)\phi_2(A_2))=\tr(A_1A_2),\quad\forall A_1, A_2\in D_n.
}
Then Theorem \ref{thm: two maps preserving trace} and Corollary \ref{thm: trace preserver on special spaces}  imply that
$\phi_1$ (resp. $\phi_2$) can be any linear automorphism, and $\phi_1$ and $\phi_2$ uniquely determine each other.
The following theorem gives the explicit description of these $(\phi_1,\phi_2)$ pairs.

\be{theorem}
{\label{thm: trace preserve 2 product}
Two maps $\phi_1, \phi_2: D_n\to D_n$ satisfy \eqref{D_n trace preserver 2 product} if and only if
there is  an invertible matrix $N\in M_n$ such that
\be{equation}
{
\phi_1(A)=\diag(N\diag^{-1}(A)),\quad \phi_2(A)=\diag(N^{-t}\diag^{-1}(A)),\quad\forall A\in D_n.
}
}

\be{proof}
{
For two vector $u, v\in\C^n$,
\be{equation}
{
\tr(\diag(u)\diag(v))=u^t v.
}
Define the  linear automorphisms $\wt{\phi}_i:\C^n\to\C^n$ by
\be{equation}
{
\wt{\phi}_i:=\diag^{-1}\circ\phi_i\circ\diag,\qquad i=1,2.
}
There exist invertible matrices $N, M\in M_n$ such that $\wt{\phi}_1(v)=Nv$ and $\wt{\phi}_2(v)=Mv$ for $v\in \C^n$.
Express elements of $D_n$ as $\diag(v)$ for  $v\in\C^n$. Then \eqref{D_n trace preserver 2 product} becomes
\be{equation}
{\label{two vectors preserve scalar product}
\wt{\phi}_1(u)^t \wt{\phi}_2(v)=u^t v,\quad\forall u, v\in\C^n.
}
It implies that $M=N^{-t}.$ Therefore, for $A\in D_n$,
\be{equation}
{
\phi_1(A)=\diag\circ\wt{\phi}_1\circ\diag^{-1}(A)=\diag(N\diag^{-1}(A))
}
and similarly $\phi_2(A)=\diag(N^{-t}\diag^{-1}(A)).$
}

\be{theorem}
{\label{thm: D_n trace preserve m product}
Suppose $m\ge 3$ and $\phi_1,\ldots,\phi_m: D_n\to D_n$ satisfy that
\begin{equation}\label{eq D_n trace preserve m product}
\tr(\phi_1(A_1)\cdots\phi_m(A_m))=\tr(A_1\cdots A_m),\quad \forall A_1,\ldots,A_m\in D_n.
\end{equation}
Then there exist  a permutation matrix $P\in M_n$ and invertible diagonal matrices $C_1,\ldots,C_m\in D_n$ with $C_1\cdots C_m=I$ such that
\be{equation}
{
\phi_i (A)=C_i P^t AP,\qquad\forall A\in D_n,\  i=1,\ldots,m.
}
}

\be{proof}
{
Let $C=\phi_3(I)\cdots\phi_m(I)$ and $\psi_1:D_n\to D_n$ such that $\psi_1(A)=\phi_2(A)C$. Then
$\tr(\phi_1(A)\psi_1(B))=\tr(AB)$ for all $A, B\in D_n$, which implies that both $\phi_1$ and $\psi_1$ are linear bijections.
So each of $\phi_3(I),\ldots,\phi_m(I)$  is invertible. Similarly, each of $\phi_1(I)$ and $\phi_2(I)$ is invertible, and each of
$\phi_2,\ldots,\phi_m$ is a linear bijection.

If $AB=A'B'$ for $A, B, A', B'\in D_n$, then for all $A_3,\ldots, A_m\in D_n$,
\be{eqnarray}
{\notag
&&\tr (\phi_1(A)\phi_2(B)\phi_3(A_3)\cdots \phi_m(A_m))
\\
&=& \tr(ABA_3\cdots A_m)=\tr(A'B'A_3\cdots A_m)
\\
&=& \tr (\phi_1(A')\phi_2(B')\phi_3(A_3)\cdots \phi_m(A_m)).
}
The bijectivities of $\phi_3,\ldots,\phi_m$ imply that $\phi_1(A)\phi_2(B)=\phi_1(A')\phi_2(B')$.  In particular,
\be{equation}
{\label{diagonal phi2 by phi1}
\phi_2(B)=\phi_1(I)^{-1}\phi_1(B)\phi_2(I),\qquad\forall B\in D_n.
}
For all $A, B\in D_n$ we also have
\be{equation}
{
\phi_1(AB)\phi_2(I) = \phi_1(A)\phi_2(B)=\phi_1(A) \phi_1(I)^{-1}\phi_1(B)\phi_2(I).
}
Let $\wt{\phi}_1(A):=\phi_1(A)\phi_1(I)^{-1}$ then
\be{equation}
{
\wt{\phi}_1(A)\wt{\phi}_1(B)=\wt{\phi}_1(AB),\quad \forall A, B\in D_n.
}
So $\wt{\phi}_1$ is an algebra automorphism of $D_n$, and the restriction of $\wt{\phi}_1$ on the group of invertible diagonal matrices in $D_n$ gives a group automorphism.
Hence there is a permutation matrix $P\in M_n$ such that
\be{equation}
{
\wt{\phi}_1(A)=P^t A P,\quad\forall A\in D_n.
}
Let $C_i:=\phi_i(I)$ for $i=1,\ldots,m$. Then $\phi_1(A)=C_1 P^t AP$, and by \eqref{diagonal phi2 by phi1},
\be{equation}{
\phi_2(A)=(P^t AP) C_2=C_2 P^t AP.
}
Similarly,
\be{equation}
{
\phi_i(A)=C_i P^t AP,\qquad\forall A\in D_n,\quad i=1,\ldots,m,
}
in which $C_1\cdots C_m=I$ by \eqref{eq D_n trace preserve m product}.
}

Both  Theorem \ref{thm: trace preserve 2 product}  and Theorem \ref{thm: D_n trace preserve m product} have counterparts in maps on
the set of real diagonal matrices and the proofs are analogeous.

%
%

\end{document}